\numberwithin{equation}{section}
\newtheorem{theorem}{Theorem}[section] %
\newtheorem{lemma}{Lemma}[section] %
\newtheorem{corollary}{Corollary}[section] %
\newtheorem{definition}{Definition}[section] %
\newtheorem{remark}{Remark}[section] %
\begin{document}
\title{Synchronization of Differential Equations Driven by Linear Multiplicative Fractional Brownian Motion}

\renewcommand{\thefootnote}{\alph{footnote}}
\author{Wei Wei$^{a}$,\ Hongjun Gao$^{b}$\thanks{Corresponding author, hjgao@seu.edu.cn}$\;$ and Qiyong Cao$^{b}$\\
{\small \it ${}^a$ School of Mathematical Sciences, Nanjing Normal University, Nanjing 210023, P. R. China}\\
{\small \it ${}^b$ School of Mathematics, Southeast University, Nanjing 211189, P. R. China}
}


\date{}
\maketitle
\begin{abstract}
This paper is devoted to the synchronization of stochastic differential equations driven by the linear multiplicative fractional Brownian motion with Hurst parameter $H\in(\frac{1}{2},1)$. We firstly prove that the equation has a unique stationary solution which generates a random dynamical system. Moreover the system has the pathwise singleton sets random attractor. Next we show up the synchronization of solutions of two coupled differential equations. At the end, we discuss two specific situations and provide the corresponding synchronization results.
\end{abstract}
\vspace{0.5cm}
	{\bf  The pathwise synchronization of stochastic differential equations driven by  multiplicative fractional Brownian motion(fBM) is very important problem. It is worth mentioning that most of the current research on the pathwise synchronization of stochastic differential equations with fBM focused on the additive situations, which has limitations in the real situations. Until now, the research for multiplicative fBm still has a big challenge even for linear multiplicative fBM. In this paper, we will consider the pathwise synchronization of stochastic differential equations driven by linear multiplicative fBM  with Hurst parameter $H\in(\frac{1}{2},1)$,  the reason we could obtain pathwise sychronization for SDE driven by linear multiplicative fBM due to we find a new transformation which we could transfer to SDE to random differential equation, and this transformation has chain rule and regularity which we need in the proof.}

\vspace{0.5cm}
\section{Introduction}
Synchronization is a fascinating phenomenon observed in various systems across diverse scientific disciplines, from physics and engineering to biology and economics. One of the first researchers who to study synchronization was Christiaan Huygens in the seventeenth century, when he noticed that two pendulum clocks mounted on the same frame would eventually synchronize and adjust their rhythms \cite{MR1898084}. It represents the remarkable ability of coupled dynamical systems to achieve a coordinated behavior, where the constituent components evolve in harmony over time. This emergent behavior has captured in imagination of scientists and researchers for decades, inspiring investigations into the underlying mechanisms and mathematical frameworks that govern synchronization. For different definitions and applications of synchronization we refer to the book \cite{MR1869044}.

In mathematical models, synchronization is often dealt with in asymptotic sense, and there have been quiet a few synchronization results for various types of differential equations. For ordinary differential equations (ODEs), the synchronization of coupled dissipative systems has been investigated in the case of autonomous systems by \cite{MR1612059,MR1623511} and nonautonomous systems by \cite{MR1639303,MR1971105,MR2241609}, provided that asymptotically stable equilibria and appropriate attractors are considered. In recent years, there has been a growing emphasis on considering the impact of stochastic factors on models and some crucial applications in relation to randommness. Consequently, there have been numerous research achievements in the field of sychronizations for stochastic differential equations within the framework of random attractors and stochastic stationary solutions in random dynamical systems. For Gaussian noise, Caraballo and Kloeden proved that synchronization in coupled determinstic disspative dynamical systems persists in the presence of environmental noise in \cite{MR2154449}. Regarding additive noise, there are also references to \cite{MR3639115,MR2286016}. Furthermore, \cite{MR3869887,MR2399931} have investigated linear multiplicative noise. Work by \cite{MR4026947} has dealt with nonlinear multiplicative Gaussian noise and yielded synchronization results in the mean square sense. Also \cite{MR2683802} provides synchronization results under additive L$\acute{e}$vy noise. Additionally, there are many conclusions in infinite-dimensional systems available in the book \cite{MR4181483}.

The fractional Brownian motion appears naturally in the modeling of many complex phenomena in applications when systems are subject to the external forcing which is neither a semmimartingale nor a Markov process. When the Hurst parameter $H$ is less than $\frac{1}{2}$, the properties of fractional Brownian motion become significantly more complex, and relevant calculates rely on rough path theory. As a result, current synchronization research is primarily focused on the case where $H$ is greater than $\frac{1}{2}$. \cite{MR3377401} presented conclusions regarding additive noise in finite lattice systems. However, there is currently a lack of research on linear multiplicative noise that fractional noise cannot be directly using the properties of Gaussian process like Ito's formula, etc. Nevertheless, by \cite{MR3377401}, we can justify the existence of fractional Orstein-Uhlenbeck processes. Therefore, inspired by \cite{MR3869887}, we attempt to velidate synchronization results for linear multiplicative fractional noise by using some techniques.

Mathematically, synchronization of two or more coupled systems means that the distance of the trajectories of the subsystems converge to together when the time goes to infinity. In this paper, we consider two stochastic differential equations driven by linear multiplicative noises :
\begin{align}
dX_t=f(X_t)dt+(a_1X_t+b_1)dB^{H_1}_t,  \notag \\
dY_t=g(Y_t)dt+(a_2Y_t+b_2)dB^{H_2}_t,
\end{align}
where $B^{H_1}_t,B^{H_2}_t$ are two independent $\mathbb{R}$-valued two-sided fractional Brownian motions with Hurst paramaters $H_1, H_2\in (\frac{1}{2},1)$. Drift term $f,g$ satisfy the following properties, here $\left\|\cdot\right\|$ denotes the Euclidean norm in $\mathbb{R}^d$:
\begin{enumerate}
    \item Linear growth condition, i.e. for all $x \in\mathbb{R}^d$,
\begin{align}
\left \| f(x) \right \|\leq l \left\| x \right \|+ M,\ \ \left \|g(x)  \right \| \leq l\left \| x \right \|+M,
\end{align}
where $l>0$ and $M>0$.
\item One-sided dissipative Lipschitz condition, i.e. for all $x,y \in\mathbb{R}^d$,
\begin{align}
   & \left \langle x-y,f(x)-f(y)\right\rangle\leq -L\left\| x-y \right \|^2,\notag \\
   & \left \langle x-y,g(x)-g(y)\right\rangle\leq -L\left\| x-y \right \|^2,
\end{align}
where $L>0$.
\item Integrability condition, i. e. there exists $m_0>0$ such that
\begin{align}
    \int^0_{-\infty}e^{ms}\left ( \left \| f(u(s)) \right \|^2+\left \| g(u(s)) \right \|^2 \right )ds<{\infty}
\end{align}
for any $m\in(0,m_0]$, and any continuous function $u:\mathbb{R}\rightarrow\mathbb{R}^d$ with uniform sub-exponential growth ( i.e. for any $\epsilon>0$ and $\omega\in\Omega$, there exists a $t_0(\epsilon,\omega)$, such that $\left \|u(t)\right \|\leq Ce^{\epsilon |t|}$ holds for $|t|\geq t_0(\epsilon,\omega)$ ).
\end{enumerate}

We want to acquire that synchronization occurs when we coupling two equations above and focus on the case that $a_1, a_2 \neq 0$. It is more convenient to deal with random differential equations compared to the stochastic differential equations. Hence our first step is to use the fractional Orstein-Uhlenbeck transformation
\begin{align}
\mathcal{T}_1: X_t \mapsto e^{-a_1O^1_t}(X_t+\frac{b_1}{a_1})-\frac{b_1}{a_1}=:U_t, \notag \\
\mathcal{T}_2: Y_t \mapsto e^{-a_2O^2_t}(X_t+\frac{b_2}{a_2})-\frac{b_2}{a_2}=:V_t
\end{align}
 to reduce system (1.1) to its equivalent system,
\begin{align}
    \frac{dU}{dt}=e^{-a_1O^1_t}f(U_te^{a_1O^1_t}+\frac{b_1}{a_1}(e^{a_1O^1_t}-1))+(a_1U_t+b_1)O^1_t,  \notag \\
    \frac{dV}{dt}=e^{-a_2O^2_t}g(V_te^{a_2O^2_t}+\frac{b_2}{a_2}(e^{a_2O^2_t}-1))+(a_2V_t+b_2)O^2_t,
\end{align}
where $O^1_t, O^2_t$ are fractional Orstein-Uhlenbeck processes satisfying
$$dO^1_t=-O^1_tdt+dB^{H_1}_t,\ \ dO^2_t=-O^2_tdt+dB^{H_2}_t.$$

However, to accomplish this step we need to note that the driving noises of the system are fractional Brownian motions, which are neither semmimartingales nor Markov processes. Moreover, the chain rule for the computation of fractional Ornstein-Uhlenbeck processes is currently missing. To overcome this difficulty, we adopt the Young integral \cite{MR4174393} for the definition of the stochastic integral since the paths of fractional Brownian motion have $\alpha$-H\"{o}lder continuity, where $\alpha$ is less than the Hurst parameter. Combined with the definition of the Young integral we verify that the chain rule still holds, and thus we can successfully apply the fractional O-U transformation. Before proceeding to the next step, we also establish the equivalence in conjugacy between (1.1) and (1.6). It is worth noting that in this paper, the space in which the systems reside is the H\"{o}lder continuous function space.

Next, we demonstrate the stability of the solutions for (1.6) and established the existence of random dynamical systems. Furthermore, the random dynamical system exhibit singleton attractors. Subsequently, we introduce linear cross-coupling to (1.5):
\begin{align}
    \left\{\begin{array}{l}
\frac{dU_{\kappa}}{dt}=e^{-a_1O^1_t}f(U_{\kappa}e^{a_1 O^1_t}+\frac{b_1}{a_1}(e^{a_1 O^1_t}-1))+(a_1 U_{\kappa}+b_1)O^1_t+\kappa(V_{\kappa}-U_{\kappa}),\\
\frac{dV_{\kappa}}{dt}=e^{-a_2O^2_t}g(V_{\kappa}e^{a_2 O^2_t}+\frac{b_2}{a_2}(e^{a_2 O^2_t}-1))+(a_2 V_{\kappa}+b_2)O^2_t+\kappa(U_{\kappa}-V_{\kappa}),
\end{array}\right.
\end{align}
and investigated the asymptotic behavior of solutions as the coupling strength parameter $\kappa$ tends to infinity. Indeed, we achieve synchronization of the system at the level of trajectories, i.e. $\lim_{\kappa\rightarrow\infty}\left \| U_{\kappa}(t,\omega)-V_{\kappa}(t,\omega) \right \|^2=0$. Furthermore, we demonstrate that as $\kappa$ approaches infinity, the solutions of (1.7) converge to the stationary solution of a so-called "averaged system":
\begin{align}
            \frac{dW}{dt}=&\frac{1}{2}[e^{-a_1O^1_t}f(e^{a_1O^1_t}W+\frac{b_1}{a_1}(e^{a_1O^1_t}-1))+e^{-a_2O^2_t}g(e^{a_2O^2_t}W+\frac{b_2}{a_2}(e^{a_2O^2_t}-1))] \notag \\
            &+\frac{1}{2}[(a_1W+b_1)O^1_t+(a_2W+b_2)O^2_t].
    \end{align}
Notably, the attractors of these systems are all singleton sets.

Finally, we revert to a coupled form of system (1.1) through the fractional O-U transformation:
\begin{align}
    \left\{\begin{array}{l}
dX_t=[f(X_t)+\kappa(e^{2\eta_t}Y_t-X_t)+\kappa(\frac{b_2}{a_2}(e^{2\eta_t}-e^{a_1O^1_t})-\frac{b_1}{a_1}(1-e^{a_1O^1_t}))]dt\\
\ \ \ \ \ \ \ \ +(a_1X_t+b_1)dB^{H_1}_t,\\
dY_t=[g(Y_t)+\kappa(e^{-2\eta_t}X_t-Y_t)+\kappa(\frac{b_1}{a_1}(e^{-2\eta_t}-e^{a_2O^2_t})-\frac{b_2}{a_2}(1-e^{a_2O^2_t}))]dt\\
\ \ \ \ \ \ \ \ +(a_2Y_t+b_2)dB^{H_2}_t,
\end{array}\right.
\end{align}
where $\eta_t=\frac{1}{2}(a_1O^1_t-a_2O^2_t)$, and we are able to obtain the synchronization results for this coupled system. At the end of the paper, we also explore two special cases as follows:

\begin{center}
\textit{Case 1: linear multiplicative noise}
\end{center}
\begin{align}
    \left\{\begin{matrix}
dX_t=f(X_t)dt+a_1X_tdB^{H_1}_t,\\
dY_t=g(Y_t)dt+a_2Y_tdB^{H_2}_t,
\end{matrix}\right.
    \end{align}

\begin{center}
\textit{Case 2: mixed noise}
\end{center}
\begin{align}
 \begin{cases}
     dX_t=f(X_t)dt+(aX_t+b_1)dB^{H_1}_t,\\
     dY_t=g(Y_t)dt+b_2dB^{H_2}_t,
 \end{cases}
\end{align}
and provide the corresponding synchronization results for each of them.

A brief outline of the paper is as follows. In Section 2, we review relevant concepts of random dynamical systems and stochastic integration, and introduce the fractional Ornstein-Uhlenbeck process. In Section 3, we present the main change of variables that will be used to transform our initial system (1.1) into system (1.5) and demonstrate the validity of this transformation along with the conjugate equivalence between SDEs and RDEs. In Section 4, we verify the stability of solutions and also prove the existence of the random attractors. Section 5 and Section 6 respectively devoted to the asymptotic behavior of systems and synchronization results. Further, synchronization results for two special cases were provided in Section 7.

\section{Preliminaries}
In this section, we will recall some concepts on random dynamical systems and stochastic integrals.

\subsection{Random Dynamical Systems}
We recall some basic theory in random dynamical systems in this subsection. More details and relevant proofs are referenced in \cite{MR1723992}.

Let $(\Omega, \mathcal{F}, \mathbb{P})$ be a probability space.

\begin{definition}
$(\Omega, \mathcal{F}, \mathbb{P}, \theta)$ is called a \emph{metric dynamical system} if:
\begin{enumerate}
    \item $\theta: \mathbb{R}\times\Omega \rightarrow \Omega$ is $(\mathcal{B}(\mathbb{R})\bigotimes\mathcal{F},\mathcal{F})$-measurable.
    \item $\theta_0\omega=\omega$ for all $\omega\in\Omega$.
    \item $\theta_{t+s}=\theta_t\circ\theta_s$ for all $t,s\in\mathbb{R}$.
    \item $\theta_t\mathbb{P}=\mathbb{P}\circ\theta_t^{-1}=\mathbb{P}$.
\end{enumerate}
\end{definition}

Moreover, the set $A\in\mathcal{F}$ is called a \emph{$\theta$-invariant set} if $\theta_tA=A$ for all $t\in\mathbb{R}$.
The metric dynamical system is called \emph{ergodic} if every $\theta$-invariant set in $\mathcal{F}$ has measure 0 or 1.
\begin{definition}
    Let $(\Omega, \mathcal{F}, \mathbb{P}, \theta)$ be a metric dynamical system.  A mapping $\phi: \mathbb{R}\times \Omega\times \mathbb{R}^d \rightarrow \mathbb{R}^d$ is called a random dynamical system if:
    \begin{enumerate}
        \item $\phi$ is $\left( \mathcal{B}(\mathbb{R}) \bigotimes \mathcal{F} \bigotimes \mathcal{B}(\mathbb{R}^d), \mathcal{B}(\mathbb{R}^d)\right)$-measurable.
        \item The mappings $\phi(t,\omega):=\phi(t,\omega,\cdot):\mathbb{R}^d \rightarrow \mathbb{R}^d $ form a \emph{cocycle} over $\theta(\cdot)$,  i.e.

    $\phi(0,\omega)=id_{\mathbb{R}^d}$ for all $\omega\in\Omega$; \\
    $\phi(t+s,\omega)=\phi(t,\theta_s\omega)\circ\phi(s,\omega)$ for all $s,t\in\mathbb{R}, \omega\in\Omega.$
    \end{enumerate}
\end{definition}
Here $ \circ$ means composition, which canonically defines an action on the left of the semigroup of self-mappings.

\emph{Random dynamical system(s)} is henceforth often abbreviated as $RDS$.
\begin{definition}
    A random variable $X\geq0$ is called \emph{tempered} if
    \begin{center}
        $\lim_{t\rightarrow\pm\infty}\frac{\log^+X(\theta_t\omega)}{|t|}=0$,\ \  for $\omega\in\Omega$ ,
    \end{center}
    where $\log^+x=\max\{0,\log x\}$ for $x\geq0$.
\end{definition}

\begin{definition}
    \begin{enumerate}
        \item A map $D:\Omega\rightarrow\mathcal{B}(\mathbb{R}^d)\setminus \{\varnothing\}$ is called a \emph{random set} if for each $x\in\mathbb{R}^d$ the map $\omega\mapsto d(x,D(\omega))$ is measurable, where $d$ is the \emph{Hausdorff semi-distance}, i.e. for $X,Y\subseteq \mathbb{R}^d$,
        \begin{center}
           $d(X,Y)=\sup_{x\in X}\inf_{y\in Y}\left \| x-y \right \|$.
        \end{center}
        \item A random set $D$ is called \emph{closed} if $D(\omega)$ is closed for all $\omega\in\Omega$.
        \item A random set $D$ is called \emph{tempered} if
        \begin{center}
            $\omega\mapsto\sup_{x\in D(\omega)}\left \| x \right \|$
        \end{center}
        is a tempered random variable.
        \item A random set $D$ is called \emph{forward invariant} with respect to a random dynamical system $\phi$ if for all $\omega\in\Omega$, and $t\geq0$,
        $$\phi(t,\omega,D(\omega)\subseteq D(\theta_t\omega).$$
    \end{enumerate}
\end{definition}

We denote by $\mathcal{D}$ the family of all tempered, closed random sets.
\begin{definition}
    A random set $B$ is called \emph{pullback absorbing set} in $\mathcal{D}$ if for all $D\in\mathcal{D}$ and for each $\omega\in\Omega$ there exists a time $t_D(\omega)>0$ such that for all $t>t_D(\omega)$,
    $$\phi(t,\theta_{-t}\omega,D(\theta_{-t}\omega))\subseteq B(\omega).$$
\end{definition}
\begin{definition}
    A random set $\mathcal{A}\in\mathcal{D}$ is called \emph{pullback attractor} if
    \begin{enumerate}
        \item $\mathcal{A}$ is compact random sets.
        \item $\mathcal{A}$ is strictly invariant, i.e. for all $t>0$ and every $\omega\in\Omega$ we have $\phi(t,\omega,\mathcal{A}(\omega))=\mathcal{A}(\theta_t\omega)$.
        \item $\mathcal{A}$ attracts all sets in $\mathcal{D}$ in the pullback sense, i.e. for all $D\in\mathcal{D}$ and for each $\omega\in\Omega$ we have
        $$\lim_{t\rightarrow\infty}d(\phi(t,\theta_{-t}\omega,D(\theta_{-t}\omega)),\mathcal{A}(\omega))=0.$$
    \end{enumerate}
\end{definition}
In order to obtain the existence of random attractors we often employ the following theorem which can be found in \cite{MR1723992}.
\begin{theorem}
    Let $(\theta, \phi)$ be a continuous RDS on $\Omega\times\mathbb{R}^d$. If there exists a family $\hat{B}=\{ B(\omega), \omega\in\Omega \}$ of non-empty measurable compact subsets $B(\omega)$ of $\mathbb{R}^d$ and for every $D(\omega)\in\mathcal{D}$, there exists $T_D(\omega)\geq0$ such that
    \begin{center}
        $\phi(t,\theta_{-t},D(\theta_{-t}\omega))\subset B(\omega), \forall t\geq T_D(\omega)$,
    \end{center}
    then the RDS $(\theta,\phi)$ has a random attractor $\hat{\mathcal{A}}= \{ \mathcal{A}(\omega), \omega\in\Omega \}$ with the component subsets defined for each $\omega\in\Omega$ by
    \begin{center}
        $\mathcal{A}(\omega) = \bigcap_{\tau>0}\overline{\bigcup_{t\geq\tau}\phi(t,\theta_t\omega)B(\theta_t\omega)}.$
    \end{center}
\end{theorem}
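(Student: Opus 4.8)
The plan is to realize the candidate attractor as the pullback $\omega$-limit set of the absorbing family $\hat{B}$ and to verify its three defining properties by the classical argument of Crauel, Flandoli and Schmalfuss. Concretely, I would read the stated formula in the pullback sense,
$$\mathcal{A}(\omega)=\bigcap_{\tau>0}\overline{\bigcup_{t\geq\tau}\phi(t,\theta_{-t}\omega)B(\theta_{-t}\omega)},$$
and record the sequential description: $y\in\mathcal{A}(\omega)$ iff there are $t_n\to\infty$ and $x_n\in B(\theta_{-t_n}\omega)$ with $\phi(t_n,\theta_{-t_n}\omega)x_n\to y$. First I would establish non-emptiness and compactness. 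Applying the absorbing hypothesis with $D=\hat{B}$, for every $t\geq T_B(\omega)$ one has $\phi(t,\theta_{-t}\omega)B(\theta_{-t}\omega)\subseteq B(\omega)$; hence for $\tau\geq T_B(\omega)$ the set $\overline{\bigcup_{t\geq\tau}\phi(t,\theta_{-t}\omega)B(\theta_{-t}\omega)}$ is a closed subset of the compact set $B(\omega)$. These sets are nested decreasing in $\tau$ and non-empty, so by the finite intersection property $\mathcal{A}(\omega)$ is non-empty, compact, and contained in $B(\omega)$.

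The core of the proof is pullback attraction. I would argue by contradiction: suppose for some $D\in\mathcal{D}$, some $\omega$, some $\delta>0$ and some $t_n\to\infty$ there are $x_n\in D(\theta_{-t_n}\omega)$ with $d(\phi(t_n,\theta_{-t_n}\omega)x_n,\mathcal{A}(\omega))\geq\delta$. For $n$ large, $t_n\geq T_D(\omega)$, so $y_n:=\phi(t_n,\theta_{-t_n}\omega)x_n\in B(\omega)$; by compactness a subsequence converges, $y_n\to y\in B(\omega)$. To reach a contradiction I would show $y\in\mathcal{A}(\omega)$. Fixing $T>0$ and using the cocycle property to split $\phi(t_n,\theta_{-t_n}\omega)=\phi(T,\theta_{-T}\omega)\circ\phi(t_n-T,\theta_{-(t_n-T)}\theta_{-T}\omega)$, the tail $z_n:=\phi(t_n-T,\theta_{-(t_n-T)}\theta_{-T}\omega)x_n$ lies in $B(\theta_{-T}\omega)$ once $t_n-T\geq T_D(\theta_{-T}\omega)$; extracting a further convergent subsequence $z_n\to z\in B(\theta_{-T}\omega)$ and using continuity of $\phi(T,\theta_{-T}\omega)$ gives $y=\phi(T,\theta_{-T}\omega)z\in\phi(T,\theta_{-T}\omega)B(\theta_{-T}\omega)$. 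Since $T$ is arbitrary, $y$ belongs to $\overline{\bigcup_{t\geq\tau}\phi(t,\theta_{-t}\omega)B(\theta_{-t}\omega)}$ for every $\tau$, whence $y\in\mathcal{A}(\omega)$, contradicting $d(y_n,\mathcal{A}(\omega))\geq\delta$.

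For strict invariance $\phi(t,\omega)\mathcal{A}(\omega)=\mathcal{A}(\theta_t\omega)$ I would reuse these two ingredients. The inclusion $\subseteq$ follows directly from continuity and the cocycle identity $\phi(t,\omega)\phi(s,\theta_{-s}\omega)=\phi(t+s,\theta_{-(t+s)}\theta_t\omega)$: if $\phi(s_n,\theta_{-s_n}\omega)x_n\to y$ then $\phi(t+s_n,\theta_{-(t+s_n)}\theta_t\omega)x_n\to\phi(t,\omega)y$, so $\phi(t,\omega)y\in\mathcal{A}(\theta_t\omega)$. For $\supseteq$, given $y\in\mathcal{A}(\theta_t\omega)$ obtained as a limit $\phi(s_n,\theta_{-s_n}\theta_t\omega)x_n\to y$, I would peel off a factor $\phi(t,\omega)$, note the tail lands in the compact $B(\omega)$, extract a convergent subsequence with limit $z\in B(\omega)$, verify $z\in\mathcal{A}(\omega)$ by the same cocycle-and-limit argument used for attraction, and conclude $y=\phi(t,\omega)z$ by continuity. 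Finally, measurability of $\omega\mapsto\mathcal{A}(\omega)$, needed so that $\mathcal{A}\in\mathcal{D}$, follows from the measurability of the absorbing family together with the standard measurable-selection and projection results for $\omega$-limit sets recorded in \cite{MR1723992}.

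I expect the pullback attraction step to be the main obstacle, since it is where the cocycle decomposition, the repeated use of the absorbing property at the shifted base point $\theta_{-T}\omega$, and the compactness extraction must be combined; continuity of the RDS and compactness of each $B(\omega)$ are the indispensable hypotheses there. The invariance proof reuses exactly these mechanisms, while non-emptiness, compactness and temperedness are comparatively routine consequences of the absorbing property.
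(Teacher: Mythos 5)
Your argument is the standard Crauel--Flandoli--Schmalfuss proof, and it is essentially correct; but be aware that the paper itself contains no proof of this statement at all --- it is quoted as a known result with a pointer to Arnold's book \cite{MR1723992}, so there is no internal proof to compare against, and what you have supplied is precisely the argument the citation stands for. Three remarks on your write-up. First, you silently repaired two misprints in the statement (reading $\theta_{-t}\omega$ in place of $\theta_t\omega$ in the formula for $\mathcal{A}(\omega)$, and restoring the missing $\omega$ in $\phi(t,\theta_{-t},\cdot)$); your pullback reading is certainly the intended one, but this correction should be stated explicitly rather than made tacitly. Second, and more substantively, both your non-emptiness/compactness step and your proof of the inclusion $\mathcal{A}(\theta_t\omega)\subseteq\phi(t,\omega)\mathcal{A}(\omega)$ invoke the absorption hypothesis with $D=\hat{B}$, i.e., they require $B$ to absorb itself. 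The theorem as stated only guarantees absorption of sets $D\in\mathcal{D}$, and $\hat{B}$ is merely assumed to consist of non-empty measurable compact sets --- temperedness of $\hat{B}$ is not among the hypotheses. This is an imprecision inherited from the quoted statement rather than an error of yours (in the paper's application in Section 4 the absorbing set is a ball of tempered random radius, so $\hat{B}\in\mathcal{D}$ there), but a careful proof must either add the hypothesis $\hat{B}\in\mathcal{D}$ or work instead with $\mathcal{A}(\omega)=\overline{\bigcup_{D\in\mathcal{D}}\Lambda_D(\omega)}$, the closure of the union of the pullback omega-limit sets $\Lambda_D(\omega)\subseteq B(\omega)$, which sidesteps self-absorption entirely. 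Third, your deferral of measurability to cited results is reasonable for a sketch, but in a self-contained treatment one would note that, by continuity of $\phi$ in $t$, the union over $t\geq\tau$ may be replaced by a union over rational $t\geq\tau$, from which measurability of $\omega\mapsto\mathcal{A}(\omega)$ follows from measurability of the family $\hat{B}$.
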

\begin{definition}
    A random variable $u_s\in\mathbb{R}^d$ is called a (random) stationary point for RDS $\phi$ if
    \begin{center}
        $\phi(t,\omega,u_s(\omega))=u_s(\theta_t\omega)$, for all $t\geq0$, $\omega\in\Omega$.
    \end{center}
\end{definition}
Since $\theta_t$ is a stationary shift for the probability measure $\mathbb{P}$, then the random process $t\mapsto u_s(\theta_t\omega)$ is a stationary process. In particular, if $\phi$ is generated by a random differential equation then $t\mapsto u_s(\theta_t\omega)$ gives a stationary solution.

Note that if the random attractor consists of singleton sets, i.e. $\mathcal{A}(\omega) = \{ \hat{u}(\omega) \}$ for some random variable $\hat{u}$ , then $\hat{u}_t(\omega):=\hat{u}(\theta_t\omega)$ is a stationary stochastic process.

Since this paper aims to discuss system (1.1) via system (1.5), the equivalence between these two systems is crucial. We now provide a lemma (\cite{MR2091122}, Lemma 2.2) as a foundation for our subsequent work.
\begin{lemma}
    Let $\phi$ be a random dynamical system and $\mathcal{T}:\Omega\times\mathbb{R}^d\rightarrow\mathbb{R}^d$ a mapping such that for each $\omega\in\Omega$ the map $x\mapsto\mathcal{T}(\omega,x)$ is a homeomorphism with inverse $\mathcal{T}^{-1}(\omega,\cdot)$ and the mappings $\mathcal{T}(\cdot,x),\mathcal{T}^{-1}(\cdot,x)$ are measurable for fixed $x\in\mathbb{R}^d$.

    Then $\psi$ defined by
    $$\psi(t,\omega,v):=\mathcal{T}^{-1}\Big(\theta_t\omega,\phi(t,\omega,\mathcal{T}(\omega,v))\Big)$$
    for $v\in\mathbb{R}^d$,$\omega\in\Omega$ and $t\in\mathbb{R}$ is again a random dynamical system. In this case, $\phi$ and $\psi$ are called \emph{conjugated}.

    Moreover, if $\phi$ has a random attractor $\mathcal{A}$, we obtain that $\psi$ has also a random attractor, given by $\mathcal{T}^{-1}\mathcal{A}$.
\end{lemma}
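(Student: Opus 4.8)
The plan is to establish the two assertions separately: first that $\psi$ satisfies the defining axioms of a random dynamical system (joint measurability together with the cocycle property), and second that $\mathcal{T}^{-1}\mathcal{A}$ inherits the three defining properties of a pullback attractor (compactness, strict invariance, pullback attraction). The first assertion is essentially algebraic, whereas the genuine work lies in the attraction property of the second.

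For the cocycle structure I would first observe that $\psi(0,\omega,v)=\mathcal{T}^{-1}(\omega,\phi(0,\omega,\mathcal{T}(\omega,v)))=\mathcal{T}^{-1}(\omega,\mathcal{T}(\omega,v))=v$, using $\theta_0\omega=\omega$, the identity property of $\phi$, and the fact that $\mathcal{T}^{-1}(\omega,\cdot)$ inverts $\mathcal{T}(\omega,\cdot)$. For the semigroup identity I would substitute the definition of $\psi$ into $\psi(t,\theta_s\omega)\circ\psi(s,\omega,v)$; the inner $\mathcal{T}(\theta_s\omega,\cdot)$ cancels the $\mathcal{T}^{-1}(\theta_s\omega,\cdot)$ produced by $\psi(s,\omega,v)$, and after invoking $\theta_{t+s}=\theta_t\circ\theta_s$ together with $\phi(t+s,\omega)=\phi(t,\theta_s\omega)\circ\phi(s,\omega)$ the expression collapses exactly to $\psi(t+s,\omega,v)$. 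Measurability of $\psi$ would then follow by viewing it as a composition of the jointly measurable $\phi$, the measurable shift $\theta$, and the maps $\mathcal{T},\mathcal{T}^{-1}$, which are measurable in $\omega$ for fixed $x$ and continuous in $x$ for fixed $\omega$, hence jointly (Carath\'eodory) measurable.

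For the attractor, set $\tilde{\mathcal{A}}(\omega):=\mathcal{T}^{-1}(\omega,\mathcal{A}(\omega))$. Compactness is immediate since $\mathcal{T}^{-1}(\omega,\cdot)$ is a homeomorphism and the continuous image of the compact set $\mathcal{A}(\omega)$ is compact, while measurability of the random set follows from the measurability hypotheses on $\mathcal{T}^{-1}$. Strict invariance is a direct computation: expanding $\psi(t,\omega,\tilde{\mathcal{A}}(\omega))$ and using the cancellation $\mathcal{T}(\omega,\mathcal{T}^{-1}(\omega,\cdot))=\mathrm{id}$ reduces it to $\mathcal{T}^{-1}(\theta_t\omega,\phi(t,\omega,\mathcal{A}(\omega)))$, which by the strict invariance of $\mathcal{A}$ equals $\mathcal{T}^{-1}(\theta_t\omega,\mathcal{A}(\theta_t\omega))=\tilde{\mathcal{A}}(\theta_t\omega)$.

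The hard part will be the pullback attraction. Using $\theta_t\theta_{-t}\omega=\omega$ one checks that $\psi(t,\theta_{-t}\omega,\tilde D(\theta_{-t}\omega))=\mathcal{T}^{-1}\big(\omega,\phi(t,\theta_{-t}\omega,\mathcal{T}(\theta_{-t}\omega,\tilde D(\theta_{-t}\omega)))\big)$, so that the $\psi$-evolution is the image under the fixed homeomorphism $\mathcal{T}^{-1}(\omega,\cdot)$ of the $\phi$-evolution of the transformed family $D(\cdot):=\mathcal{T}(\cdot,\tilde D(\cdot))$. Two issues must be resolved. First, one must ensure $D\in\mathcal{D}$ so that the attraction property of $\mathcal{A}$ for $\phi$ applies; this requires $\mathcal{T}$ to map tempered families to tempered families, which I would verify from the temperedness and growth structure of $\mathcal{T}$ built into the universe $\mathcal{D}$. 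Second, granting $d(\phi(t,\theta_{-t}\omega,D(\theta_{-t}\omega)),\mathcal{A}(\omega))\to0$, one must transport this convergence through $\mathcal{T}^{-1}(\omega,\cdot)$. The key observation is that the attracted $\phi$-sets are eventually contained in a bounded neighborhood of the compact set $\mathcal{A}(\omega)$; since $\mathcal{T}^{-1}(\omega,\cdot)$ is uniformly continuous on that bounded set, the Hausdorff semi-distance to $\mathcal{T}^{-1}(\omega,\mathcal{A}(\omega))=\tilde{\mathcal{A}}(\omega)$ also tends to zero, which is precisely pullback attraction for $\psi$. I expect the temperedness compatibility of $\mathcal{T}$ to demand the most care, since mere continuity of $\mathcal{T}$ does not by itself preserve the family $\mathcal{D}$.
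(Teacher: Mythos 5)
The paper itself gives no proof of this lemma: it is quoted verbatim from \cite{MR2091122}, so the only comparison available is with the standard conjugation argument given there. Most of your outline reproduces that argument correctly: the identity and cocycle computations, joint measurability of $\psi$ via the Carath\'eodory argument (measurable in $\omega$, continuous in $x$), compactness and measurability of $\mathcal{T}^{-1}\mathcal{A}$, the strict-invariance computation, and the transport of the Hausdorff semi-distance through $\mathcal{T}^{-1}(\omega,\cdot)$ by uniform continuity on a compact neighborhood of $\mathcal{A}(\omega)$ are all sound.

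The genuine gap is exactly the point you flag but do not resolve: your attraction step needs $D(\cdot):=\mathcal{T}(\cdot,\tilde D(\cdot))\in\mathcal{D}$ whenever $\tilde D\in\mathcal{D}$, and you propose to extract this from ``growth structure of $\mathcal{T}$ built into the universe $\mathcal{D}$'' --- but the lemma's hypotheses contain no such structure: $\mathcal{T}(\omega,\cdot)$ is only a homeomorphism, measurable in $\omega$. This preservation can genuinely fail. Take $d=1$ and $\mathcal{T}(\omega,x)=e^{R(\omega)}x$ with $R\geq 0$ measurable but not tempered; then for the tempered family $\tilde D(\omega)=\{1\}$ one gets $D(\omega)=\{e^{R(\omega)}\}$, and $\sup_{x\in D(\theta_t\omega)}\left\|x\right\|=e^{R(\theta_t\omega)}$ violates the temperedness condition, so the attraction property of $\mathcal{A}$ cannot be invoked for $D$ at all. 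Hence, when $\mathcal{D}$ is the tempered universe, the ``moreover'' clause is not provable from the stated hypotheses alone. The standard resolution, which is what the cited source effectively asserts, is that $\mathcal{T}^{-1}\mathcal{A}$ is an attractor for the \emph{transformed} universe $\mathcal{T}^{-1}\mathcal{D}=\{\omega\mapsto\mathcal{T}^{-1}(\omega,D(\omega)):D\in\mathcal{D}\}$; with that formulation your computation closes immediately and no temperedness question arises. Alternatively one adds the hypothesis that $\mathcal{T}$ and $\mathcal{T}^{-1}$ map tempered random sets to tempered random sets. In the present paper this extra hypothesis does hold in the application, because $\mathcal{T}(\omega,x)=e^{-aO_0(\omega)}(x+\frac{b}{a})-\frac{b}{a}$ is affine with coefficients $e^{\pm aO_0(\omega)}$, and these are tempered by the sub-polynomial growth of the fractional Ornstein--Uhlenbeck process (Lemma 2.5). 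You should either prove the lemma for the transformed universe or state and verify this additional hypothesis explicitly; as written, the attraction step would fail.
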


\subsection{Fractional Stochastic Integral}
Until now, there have been various distinct definitions of integrals involving fractional Brownian motion. Below, we will present the fractional stochastic integral definition employed in this paper.

Let $I$ be a compact interval on $\mathbb{R}$. Denote $C(I,V)$ and $C^{\alpha}(I,V)$ by the space of V-valued continuous function and the space of V-valued $\alpha$-H\"{o}lder continuous function space respectively, where V is a Banach space equipped with norm $|\cdot|
$ and  $\alpha\in(0,1)$. A norm on the $C^{\alpha}(I,V)$ is given by
\begin{equation}
    ||u||_{\alpha}=\sup_{t\in I}|u(t)|+|||u|||_{\alpha}
\end{equation}
with
\begin{equation}
    |||u|||_{\alpha}=\sup_{t,s\in I, t\neq s}\frac{|u(t)-u(s)|}{|t-s|^\alpha}.
\end{equation}

Suppose that $Y\in C^\alpha(I,V),X\in C^\beta(I,L(V))$. We now give a meaning to the expression $\int Y_tdX_t$. A natural approach would be try to define the integral as limit of Riemann-Stieljes sums, that is
\begin{eqnarray}\label{2.1}
    \int_{0}^{1}Y_tdX_t=\lim_{|\mathcal{P}|\to0}\sum_{[s,t]\in\mathcal{P}}Y_s(X_t-X_s),
\end{eqnarray}
where $\mathcal{P}$ denotes a partition of [0,1] and $|\mathcal{P}| $ denotes the length of the largest element of $\mathcal{P}$. From Young's theory, we know that such a sum converges if $\alpha+\beta>1$. As a sequence we can define the \emph{Young integral} like above. More details could be found in \cite{MR1555421}.

Now we recall some basic facts about fractional Brownian motion. For the specific theory and related properties, see \cite{MR2387368,MR2378138}.

Given $H\in(0,1)$, a continuous centered Gaussian process $B^H_t,t\in\mathbb{R}$ with the covariance function
\begin{equation}
    \mathbb{E}B^H_tB^H_s=\frac{1}{2}(|t|^{2H}+|s|^{2H}-|t-s|^{2H}), t,s\in\mathbb{R}
\end{equation}
is called a two-sided one-dimensional fractional Brownian motion (fBm), and $H$ is called the Husrt parameter. In this paper, we only consider the situation that $H\in(\frac{1}{2},1)$. Furthermore, by the Kolmogorov's theorem we know that $B^H_t$ has a continuous version, see \cite{MR1070361}. Hence we could consider the canonical version of a fBm as below.

Let $\Omega:=C_0(\mathbb{R},\mathbb{R})$ be the space of continuous functions on $\mathbb{R}$ with values in $\mathbb{R}$. Here and below the subscript zero indicates the functions take value zero at zero, equipped with the compact open topology. Let $\mathcal{F}$ be the associated Borel $\sigma$-algebra generated by $C_0(\mathbb{R},\mathbb{R})$, and let $\mathbb{P}$ be the distribution of $B^H_t$. We consider the Wiener shift given by
\begin{equation}
    \theta_t\omega(\cdot):=\omega(\cdot+t)-\omega(t), \ \ t\in\mathbb{R}
\end{equation}
for $\omega\in\Omega$. Thanks to \cite{MR2836654}, the quadruple $(\Omega, \mathcal{F},\mathbb{P},{\theta_t})$ is an ergodic metric dynamical system.

According to the Lemma 2.6 in \cite{MR2095071}, we note that the fBm has polynomial growth as $|t|\to\infty$. In fact, for every $\omega\in\Omega_0$, where $\Omega_0\in\mathcal{F}$ is a $(\theta_t)_{t\in\mathbb{R}}$-invariant full measure set, there exists a random constant $K(\omega)>0$ such that
\begin{equation}
    |B^H_t(\omega)|\leq K(\omega)(1+|t|^2)
\end{equation}
for all $t\in\mathbb{R}$. From now on, we could change $\Omega$ such that $\omega=0$ for $\omega\notin\Omega_0$.

Moreover, by the Kolmogorove continuity criterion, $B^H_t$ has a version, denoted by $\omega$, which is $\alpha$-H\"{o}lder continuous on any bounded interval $[T_1,T_2]\subset\mathbb{R}$ for $\frac{1}{2}<\alpha<H$. Let $\Omega_1\subset C_0(\mathbb{R},\mathbb{R})$ the set of functions, which are $\alpha$-H\"{o}lder continuous on any interval $[T_1,T_2]$.
\begin{lemma}[Lemma 15, \cite{MR3072986}]
    $\Omega_1\in\mathcal{B}(C_0(\mathbb{R},\mathbb{R}))$ and $\mathbb{P}(\Omega_1)=1$. In addition, $\Omega_1$ is $(\theta_t)_{t\in\mathbb{R}}$-invariant.
\end{lemma}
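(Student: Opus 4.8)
The plan is to verify the three assertions in turn: that $\Omega_1$ is Borel, that it carries full probability, and that it is invariant under the shift $\theta_t$. Of these, the measurability requires the most care (it is where I expect the main obstacle to lie), the full-measure claim is a routine consequence of the Kolmogorov continuity criterion already invoked above, and the invariance is a one-line substitution. Throughout, recall that $\mathbb{P}$ is the law of $B^H$ on $C_0(\mathbb{R},\mathbb{R})$, so that the canonical coordinate process satisfies $B^H_t(\omega)=\omega(t)$ and Hölder continuity of paths is literally Hölder continuity of $\omega$.

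\emph{Measurability.} First I would reduce $\alpha$-Hölder continuity on every bounded interval to $\alpha$-Hölder continuity on the countable family $[-k,k]$, $k\in\mathbb{N}$, since every bounded interval is contained in some $[-k,k]$ and the seminorm only decreases under restriction. For a fixed interval $J=[-k,k]$ write $[\omega]_{\alpha,J}=\sup_{s\ne t\in J}|\omega(t)-\omega(s)|/|t-s|^{\alpha}$. The crucial point is that, because $\omega$ is continuous, this supremum is unchanged if $s,t$ range only over the countably many rationals in $J$; and for each fixed rational pair the map $\omega\mapsto|\omega(t)-\omega(s)|/|t-s|^{\alpha}$ is continuous in the compact-open topology, since point evaluations are continuous. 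Hence $\omega\mapsto[\omega]_{\alpha,J}$ is a countable supremum of continuous functions, so it is lower semicontinuous, in particular Borel. Writing
\begin{equation}
\Omega_1=\bigcap_{k\ge 1}\bigcup_{n\ge 1}\bigl\{\omega:[\omega]_{\alpha,[-k,k]}\le n\bigr\}
\end{equation}
then exhibits $\Omega_1$ as a countable intersection of countable unions of Borel (indeed closed) sets, whence $\Omega_1\in\mathcal{B}(C_0(\mathbb{R},\mathbb{R}))$.

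\emph{Full measure.} On each $[-k,k]$ I would apply the Kolmogorov--Chentsov criterion. Gaussianity together with the covariance function gives $\mathbb{E}|B^H_t-B^H_s|^{2p}=c_p|t-s|^{2pH}$ for every $p\in\mathbb{N}$. Fixing $\alpha\in(\tfrac12,H)$ and choosing $p$ so large that $H-\tfrac{1}{2p}>\alpha$, the criterion guarantees that $\mathbb{P}$-almost every path is $\alpha$-Hölder continuous on $[-k,k]$, i.e. $\mathbb{P}\bigl(\bigcup_{n}\{[\omega]_{\alpha,[-k,k]}\le n\}\bigr)=1$. Intersecting over the countably many values of $k$ yields $\mathbb{P}(\Omega_1)=1$.

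\emph{Invariance.} This is immediate from the definition of the Wiener shift. For $r,s\in\mathbb{R}$,
\begin{equation}
\theta_t\omega(r)-\theta_t\omega(s)=\bigl(\omega(r+t)-\omega(t)\bigr)-\bigl(\omega(s+t)-\omega(t)\bigr)=\omega(r+t)-\omega(s+t),
\end{equation}
so that $[\theta_t\omega]_{\alpha,[T_1,T_2]}=[\omega]_{\alpha,[T_1+t,T_2+t]}$ for every bounded interval. Consequently $\omega$ is $\alpha$-Hölder on every bounded interval if and only if $\theta_t\omega$ is, that is $\theta_t^{-1}\Omega_1=\Omega_1$ for every $t\in\mathbb{R}$, which is the asserted $(\theta_t)_{t\in\mathbb{R}}$-invariance. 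The only genuinely delicate ingredient is the measurability step, where the passage to rational arguments is what makes the Hölder seminorm a countable, and hence measurable, supremum; once that is secured the other two claims follow from Kolmogorov--Chentsov and a direct substitution, respectively.
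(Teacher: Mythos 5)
Your proof is correct, but note that the paper itself contains no proof of this lemma: it is quoted as Lemma 15 of the cited reference \cite{MR3072986}, so there is no internal argument to compare against. Your three-step argument --- (i) measurability by reducing to the countable family of intervals $[-k,k]$ and to rational pairs, so that the H\"older seminorm $[\omega]_{\alpha,[-k,k]}$ becomes a countable supremum of functions that are continuous for the compact-open topology, hence lower semicontinuous, with $\Omega_1=\bigcap_k\bigcup_n\{[\omega]_{\alpha,[-k,k]}\le n\}$; (ii) full measure via Kolmogorov--Chentsov using arbitrarily high Gaussian moments of the increments; (iii) invariance via the identity $[\theta_t\omega]_{\alpha,[T_1,T_2]}=[\omega]_{\alpha,[T_1+t,T_2+t]}$ --- is the standard route and is essentially the argument of the cited source, so you have in effect supplied the proof the paper outsources. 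One point worth tightening in step (ii): Kolmogorov--Chentsov, as usually stated, produces a H\"older-continuous \emph{modification} of the process, whereas you need that $\mathbb{P}$-almost every canonical path itself lies in $\Omega_1$. This is closed by a one-line remark: the coordinate process on $C_0(\mathbb{R},\mathbb{R})$ already has continuous paths, so it is indistinguishable from any continuous modification (equivalently, the H\"older estimate obtained on dyadic points extends to all of $[-k,k]$ by continuity of $\omega$), and therefore the full-measure statement transfers to the canonical paths. With that remark inserted, your argument is complete.
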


In the following sections, we consider the ergodic metric dynamical system introduced above restrict to the set $\Omega_1$. More precisely, let $\mathcal{F}_1$ be the Borel $\sigma$-subalgebra of $\mathcal{F}$ with respect to $\Omega_1$, let $\mathbb{P}_1$ be the restriction of $\mathbb{P}$ to $\mathcal{F}_1$, and $\theta$ represents the restriction of the Wiener shift to $\Omega_1\times\mathbb{R}$. Then $(\Omega_1, \mathcal{F}_1, \mathbb{P}_1, \theta)$ forms a metric dynamical system such that for every $A_1\in\mathcal{F}_1$ and $A\in\mathcal{F}$ with $A_1=A\cap\Omega_1$, we have that $\mathbb{P}_1(A_1)=\mathbb{P}(A)$ independent of the representation by $A$. In addition, the ergodicity of $(\Omega, \mathcal{F}, \mathbb{P}, \theta)$ is transferred to $(\Omega_1, \mathcal{F}_1, \mathbb{P}_1, \theta)$.


\subsection{Fractional Ornstein-Uhlenbeck Process}

The fractional Ornstein-Uhlenbeck process, abbreviated as fractional O-U process, can be constructed as the solution of the SDE
\begin{equation}
    dO^\nu_t=-\nu O^\nu_tdt+dB^H_t,\qquad  O^\nu_0=x,
\end{equation}
where $\nu>0$ and $B^H_t$ is a fBm with Hurst parameter $H>\frac{1}{2}$. We can easily see that the $O^{\nu,x}_t=e^{-\nu(t-t_0)}x+e^{-\nu t}\int^t_{t_0}e^{\nu s}dB^H(s),  t>t_0$, is the solution of (2.7). And the stationary solution is the process
\begin{equation}
    O^\nu_t=e^{-\nu t}\int^t_{-\infty}e^{\nu s}dB^H(s),
\end{equation}
see \cite{MR2524684}. $O^\nu_t$ is called a fractional O-U process, and due to the smoothness of the integrand, we conclude that this integral is well-defined. For simplicity, in this paper we let $\nu=1$, and denote $O^1_t$ as $O_t$.

\begin{lemma}[Lemma 1, \cite{MR2524684}]

For all $\omega\in\Omega$ and $t\in\mathbb{R}$, the integrals $e^{-\nu t}\int^t_{-\infty}e^{\nu s}dB^H_s(\omega)$ is well defined. Moreover, for all $\omega\in\Omega$ we have
\begin{equation}
|e^{-t}\int^t_{-\infty}{e^s}d{B^H_s(\omega)}|\leq K(\omega)(1+|t|)^2
\end{equation}
for all $\omega\in\Omega$.
\end{lemma}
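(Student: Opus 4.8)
The plan is to reduce the improper Young integral to an absolutely convergent ordinary Riemann integral by means of integration by parts, and then to bound the latter directly with the polynomial growth estimate (2.6). Throughout I fix $\omega\in\Omega_0$ and let $\alpha\in(\tfrac12,H)$ denote a H\"older exponent of the path $B^H_\cdot(\omega)$.

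First I would settle well-definedness on bounded intervals. For any $N>0$ the integrand $s\mapsto e^{\nu s}$ is smooth, hence $\beta$-H\"older on $[-N,t]$ for any $\beta\in(1-\alpha,1)$, while $B^H_\cdot(\omega)\in C^\alpha([-N,t],\mathbb{R})$. Since $\alpha+\beta>1$, Young's theory guarantees that $\int_{-N}^t e^{\nu s}\,dB^H_s(\omega)$ exists as the limit of Riemann--Stieltjes sums. Young integrals obey the integration-by-parts rule, and because $e^{\nu s}$ is $C^1$ the conjugate integral $\int B^H_s\,d(e^{\nu s})$ is an ordinary Riemann integral; thus
\begin{equation}
\int_{-N}^t e^{\nu s}\,dB^H_s(\omega)=e^{\nu t}B^H_t(\omega)-e^{-\nu N}B^H_{-N}(\omega)-\nu\int_{-N}^t e^{\nu s}B^H_s(\omega)\,ds.
\end{equation}

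Next I would pass to the limit $N\to\infty$ and control the two $N$-dependent terms through (2.6). The boundary term obeys $e^{-\nu N}|B^H_{-N}(\omega)|\leq K(\omega)e^{-\nu N}(1+N^2)\to 0$, while the bound $|B^H_s(\omega)|\leq K(\omega)(1+s^2)$ together with the exponential weight gives $\int_{-\infty}^t e^{\nu s}|B^H_s(\omega)|\,ds<\infty$, since exponential decay dominates polynomial growth as $s\to-\infty$. Hence the limit exists and
\begin{equation}
\int_{-\infty}^t e^{\nu s}\,dB^H_s(\omega)=e^{\nu t}B^H_t(\omega)-\nu\int_{-\infty}^t e^{\nu s}B^H_s(\omega)\,ds,
\end{equation}
which establishes that the integral is well defined. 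For the quantitative estimate I specialise to $\nu=1$, multiply this identity by $e^{-t}$, and apply the triangle inequality to get
\begin{equation}
\Big|e^{-t}\int_{-\infty}^t e^{s}\,dB^H_s(\omega)\Big|\leq |B^H_t(\omega)|+e^{-t}\int_{-\infty}^t e^{s}|B^H_s(\omega)|\,ds.
\end{equation}
Inserting (2.6) and using the elementary identity $e^{-t}\int_{-\infty}^t e^{s}(1+s^2)\,ds=t^2-2t+3$ produces a bound of the form $K(\omega)(2t^2-2t+4)$; since $2t^2-2t+4\leq 4(1+|t|)^2$ for every $t\in\mathbb{R}$, the claimed inequality follows after relabelling $K(\omega)$.

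I expect the only genuine obstacle to be the rigorous justification of the integration-by-parts identity for the Young integral together with the passage $N\to\infty$: one must verify that the truncated Young integrals converge to the improper one and that the boundary and interior contributions separate cleanly in the limit. Once that identity is secured, the remaining steps are the two elementary integral estimates above, which are routine.
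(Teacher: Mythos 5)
Your proposal is correct and is essentially the argument behind this lemma: the paper itself states the result without proof, citing Lemma 1 of \cite{MR2524684}, and both that source's proof and the paper's own surrounding computations (the identity $O_t=B^H_t-\int_{-\infty}^{t}e^{-(t-s)}B^H_s\,ds$ used in the proof of Lemma 2.4, and the limit $\lim_{r\to-\infty}e^{r}B^H_r=0$ drawn from (2.6)) rest on exactly your reduction, namely Young integration by parts turning the improper stochastic integral into an absolutely convergent Riemann integral, followed by the polynomial growth bound on the fBm path. Nothing further is needed.
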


It can easily be seen that $O_t$ has the sub-exponential growth from (2.9).

Moreover, by means of the integration by parts, it's also easy to show that $O_t, t\in\mathbb{R}$ has $\alpha$-H\"{o}lder continue regularity, where $\frac{1}{2}<\alpha<H$. In fact,
$$\begin{aligned}
    O_t-O_s
    &=\ e^{-t}\int^t_{-\infty}{e^r}d{B^H_r}-e^{-s}\int^s_{-\infty}{e^r}d{B^H_r}\\
    &=\ e^{-t}\int^t_s{e^r}d{B^H_r}+(e^{-t}-e^{-s})\int^s_{-\infty}{e^r}d{B^H_r}\\
    &=\ e^{-t}[(e^tB^H_t-e^sB^H_s)-\int^t_s{e^r}{B^H_r}dr]+(e^{-t}-e^{-s})[e^rB^H_r|^s_{-\infty}-\int^s_{-\infty}{e^r}d{B^H_r}],
\end{aligned}$$
note that we have
$$\lim_{r\rightarrow{-\infty}}e^rB^H_r=0$$
from (2.6), then the last term of equality above is well-defined. Furthermore, the H\"{o}lder continuity of $O_t$ is then obtained from $\alpha$-H\"{o}lder continuous regularity of fBm.

\begin{lemma}
    For all $\omega\in\Omega$, we have
    \begin{equation}
        \lim_{t\rightarrow\infty}\frac{1}{t}\int^t_0O_s(\omega)ds=0.
    \end{equation}
\end{lemma}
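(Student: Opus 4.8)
The plan is to recognize the Ces\`aro time average as a Birkhoff ergodic average for the metric dynamical system $(\Omega_1,\mathcal{F}_1,\mathbb{P}_1,\theta)$ and to invoke the ergodic theorem. Define the random variable $\tilde{O}(\omega):=O_0(\omega)=\int_{-\infty}^0 e^s\,dB^H_s(\omega)$. The key structural fact I would establish first is the stationarity relation
$$O_s(\omega)=\tilde{O}(\theta_s\omega)\qquad\text{for all }s\in\mathbb{R},\ \omega\in\Omega_1,$$
so that $\frac{1}{t}\int_0^t O_s(\omega)\,ds=\frac{1}{t}\int_0^t\tilde{O}(\theta_s\omega)\,ds$ is exactly the time average of the observable $\tilde{O}$ along the flow $\theta$.

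To prove this relation I would perform the change of variables $r=u+s$ in the defining Young integral. Using the Wiener shift $\theta_s\omega(\cdot)=\omega(\cdot+s)-\omega(s)$, the increments of the path $\theta_s\omega$ over an interval coincide with the increments of $\omega$ over the interval translated by $s$; since the Young integral is defined pathwise through Riemann--Stieltjes sums, this yields $\int_{-\infty}^0 e^u\,dB^H_u(\theta_s\omega)=e^{-s}\int_{-\infty}^s e^r\,dB^H_r(\omega)=O_s(\omega)$. Here I would use that $\Omega_1$ is $\theta$-invariant (Lemma 2.2), so that the $\alpha$-H\"{o}lder regularity needed to make sense of the Young integral is preserved along the orbit, and that the integral over $(-\infty,s]$ converges by Lemma 2.4.

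It then remains to identify the limit. The random variable $\tilde{O}$ is a Young integral of the deterministic kernel $e^s\mathbf{1}_{(-\infty,0]}$ against the centered Gaussian process $B^H$, hence $\tilde{O}$ is itself centered Gaussian; in particular $\tilde{O}\in L^1(\Omega_1)$ and $\mathbb{E}[\tilde{O}]=0$. Since $(\Omega_1,\mathcal{F}_1,\mathbb{P}_1,\theta)$ is ergodic, the continuous-time Birkhoff ergodic theorem gives
$$\lim_{t\to\infty}\frac{1}{t}\int_0^t\tilde{O}(\theta_s\omega)\,ds=\mathbb{E}[\tilde{O}]=0$$
for $\mathbb{P}_1$-almost every $\omega$, and the conclusion for every $\omega$ follows from the convention (adopted after (2.6)) that $\omega$ is reset to $0$ off the $\theta$-invariant full-measure set on which convergence holds.

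The main obstacle I anticipate is the stationarity identity $O_s(\omega)=\tilde{O}(\theta_s\omega)$: one must justify the change of variables rigorously for the pathwise Young integral on the unbounded domain $(-\infty,s]$ and confirm its compatibility with the shift. A useful independent check comes from integration by parts in the defining equation $dO_s=-O_s\,ds+dB^H_s$, which gives $\frac{1}{t}\int_0^t O_s\,ds=\frac{1}{t}\bigl(B^H_t+O_0-O_t\bigr)$; the claim is then equivalent to $B^H_t/t\to0$ and $O_t/t\to0$, each of which also follows from the ergodicity of $\theta$ applied to the (stationary) increments of $B^H$ and to the stationary process $O$ respectively.
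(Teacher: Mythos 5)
Your proposal is correct, but it takes a genuinely different route from the paper's own proof. The paper argues pathwise and by hand: using integration by parts it writes $O_t(\omega)=B^H_t(\omega)-\int_{-\infty}^t e^{-(t-s)}B^H_s(\omega)\,ds$, computes the resulting double integral explicitly, and reduces the claim to $\lim_{t\to\infty}\frac{1}{t}\int_{-\infty}^t e^{-(t-s)}B^H_s(\omega)\,ds=0$, which it establishes by splitting the integration domain at $\pm T(\epsilon,\omega)$ and invoking the sub-linear growth of fBm paths ($|B^H_t(\omega)|\le\epsilon t$ for large $t$). You instead identify $\frac{1}{t}\int_0^t O_s(\omega)\,ds$ as a Birkhoff average of the observable $\tilde O=O_0$ along the Wiener shift via the stationarity identity $O_s(\omega)=\tilde O(\theta_s\omega)$ --- an identity the paper itself implicitly endorses when it rewrites the limit as (4.7) --- and then invoke the ergodicity established in Section 2.2 together with the continuous-time Birkhoff theorem and $\mathbb{E}\tilde O=0$. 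Both arguments are sound. What your ergodic route buys is brevity and generality: nothing about $O$ is used beyond stationarity and integrability, so the same argument covers any stationary $L^1$ observable of the shift. What it costs is explicitness of the exceptional set: Birkhoff gives only an almost-sure limit, so the lemma's ``for all $\omega\in\Omega$'' must be rescued by shrinking $\Omega$ to a $\theta$-invariant full-measure subset (you correctly note that the convergence set is invariant, and the paper's convention after (2.6) licenses this kind of modification, though strictly it must be re-applied to this new set), whereas the paper's estimate holds for every individual path satisfying the stated growth properties; note, however, that the paper too silently invokes an almost-sure property (sub-linear growth of fBm, which does not follow from the quadratic bound (2.6)), so the two proofs are on equal footing in this respect. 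Finally, your integration-by-parts ``independent check'' $\int_0^t O_s\,ds=O_0-O_t+B^H_t$ is the paper's computation in disguise: since $\int_{-\infty}^t e^{-(t-s)}B^H_s\,ds=B^H_t-O_t$, the paper's remaining task is exactly $\frac{1}{t}\bigl(B^H_t-O_t\bigr)\to0$, which it proves by direct estimation rather than by citing Lemma 2.5 with $\delta=1$ as you suggest.
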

\begin{proof}
    We note that
    $$O_t(\omega)=e^{-t}\int^t_{-\infty}e^sd{B^H_s(\omega)}=B^H_t(\omega)-\int^t_{-\infty}e^{-(t-s)}B^H_s(\omega)ds,$$
    for simplicity, we denote by $\omega_t:=B^H_t(\omega)$, it is sufficient to prove
    \begin{equation}
        \lim_{t\rightarrow\infty}\frac{1}{t}\Big(\int^t_0\omega(s)ds-\int^t_0ds\int^s_{-\infty}e^{-(s-r)}\omega(r)dr\Big)=0.
    \end{equation}

    Note that
    \begin{align}
        \int^t_0ds\int^s_{-\infty}e^{-(s-r)}\omega(r)dr=-e^{-t}\int^t_{-\infty}e^r\omega(r)dr+\int^0_{-\infty}e^r\omega(r)dr+\int^t_0\omega(r)dr. \notag
    \end{align}

    Using (2.6), we get that
    \begin{align}
        \lim_{t\rightarrow\infty}\frac{1}{t}\int^0_{-\infty}e^r\omega(r)dr=0, \notag
    \end{align}
    it remains to show that
    \begin{equation}
        \lim_{t\rightarrow\infty}\frac{1}{t}\int^t_{-\infty}e^{-(t-s)}\omega(s)ds=0.
    \end{equation}

    Due to the sub-linear growth of fBm, for $\forall\epsilon>0$, $\exists $ $ T:=T(\epsilon,\omega)>0$, s.t. for $t>T$, $\omega(t)\leq\epsilon\cdot t$. Furthermore, $\omega(t)$ is a continuous function, hence we denote $C(\epsilon,\omega):=\sup_{t\in[-T,T]}|\omega(t)|$, then we have
    \begin{align}
        \frac{1}{t}\int^t_{-\infty}e^{-(t-s)}\omega(s)ds&=\frac{1}{t}\int^{-T}_{-\infty}e^{-(t-s)}\omega(s)ds+\frac{1}{t}\int^T_{-T}e^{-(t-s)}\omega(s)ds+\frac{1}{t}\int^t_Te^{-(t-s)}\omega(s)ds  \notag \\
        &\leq \frac{1}{t}\int^{-T}_{-\infty}e^{-(t-s)}\cdot\epsilon(-s)ds+\frac{1}{t}\int^T_{-T}e^{-(t-s)}\cdot C(\epsilon,\omega)ds+\frac{1}{t}\int^t_Te^{-(t-s)}\cdot\epsilon sds \notag \\
        &=\frac{\epsilon}{t}e^{-t}\int^{-T}_{-\infty}e^s|s|ds+\frac{1}{t}e^{-t}\int^T_{-T}e^s\cdot C(\epsilon,\omega)ds+\frac{\epsilon}{t}e^{-t}\int^t_Te^ssds  \notag \\
        &=: I_1+I_2+I_3,  \notag
    \end{align}
    where  $\int^{-T}_{-\infty}e^s|s|ds$ and   $\int^T_{-T}e^s\cdot C(\epsilon,\omega)ds$ are bounded, thus $\lim_{t\rightarrow\infty}I_1=\lim_{t\rightarrow\infty}I_2=0$.
    \begin{align}
        \frac{\epsilon}{t}e^{-t}\int^t_Te^ssds&=\frac{\epsilon}{t}e^{-t}\cdot(te^t-Te^T-e^t+e^T) \notag \\
        &\leq \epsilon\cdot(1+\frac{Te^T}{te^t}+\frac{1}{t}+\frac{e^T}{te^t}).  \notag
    \end{align}

    Since $\epsilon$ is arbitrarily chosen, $I_3$ converge to zero as $t\rightarrow\infty$.
\end{proof}

\begin{lemma}[Lemma 3.4, \cite{MR4549839}]
    For any $\delta>0$, we have
    \begin{align}
        \lim_{t\rightarrow\infty}|t|^{-\delta}|O_t|=0. \notag
    \end{align}
\end{lemma}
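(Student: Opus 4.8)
The plan is to exploit the fact that $O_t$ is a \emph{stationary} centered Gaussian process, and to show by a Borel--Cantelli argument on unit time-intervals that such a process cannot grow faster than any positive power of $t$. The starting observation is that the stationary representation (2.8) together with the Wiener shift (2.5) gives the cocycle identity $O_t(\omega)=O_0(\theta_t\omega)$: indeed, writing $B^H_s(\theta_t\omega)=B^H_{s+t}(\omega)-B^H_t(\omega)$ in $O_0(\theta_t\omega)=\int_{-\infty}^0 e^s\,dB^H_s(\theta_t\omega)$, noting that the additive constant $B^H_t(\omega)$ has zero differential, and changing variables $r=s+t$ recovers $e^{-t}\int_{-\infty}^t e^r\,dB^H_r(\omega)=O_t(\omega)$. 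Since $\theta_t$ preserves $\mathbb{P}$, each $O_t$ has the same (Gaussian) law as $O_0$, and, crucially, the supremum of $|O|$ over a unit window is shift-stationary.

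Concretely, I would set $M(\omega):=\sup_{s\in[0,1]}|O_s(\omega)|$, so that $\sup_{s\in[0,1]}|O_{n+s}(\omega)|=M(\theta_n\omega)$ for every integer $n\geq0$, and $M(\theta_n\cdot)$ has the same law as $M$. The key quantitative input is that $M$ has finite moments of every order. This I would obtain from the Borell--Tsirelson--Ibragimov--Sudakov (Borell--TIS) concentration inequality for the supremum of a continuous centered Gaussian process: since $O$ has a.s.\ continuous (indeed $\alpha$-H\"{o}lder) paths on $[0,1]$ and $\sup_{s\in[0,1]}\mathrm{Var}(O_s)=\mathrm{Var}(O_0)<\infty$, the variable $M$ has sub-Gaussian tails about its (finite) mean, whence $\mathbb{E}[M^p]<\infty$ for all $p\geq1$.

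With this in hand the conclusion follows by Chebyshev and Borel--Cantelli. Fixing $\delta>0$, choose an integer $p$ with $p\delta/2>1$; then
\begin{align}
\sum_{n\geq1}\mathbb{P}\big(M(\theta_n\omega)>n^{\delta/2}\big)=\sum_{n\geq1}\mathbb{P}\big(M>n^{\delta/2}\big)\leq \mathbb{E}[M^p]\sum_{n\geq1}n^{-p\delta/2}<\infty, \notag
\end{align}
so that almost surely there is $N(\omega)$ with $M(\theta_n\omega)\leq n^{\delta/2}$ for all $n\geq N(\omega)$. For $t\geq N(\omega)+1$, writing $n=\lfloor t\rfloor$ and using $|O_t|\leq M(\theta_n\omega)$ together with $t\geq n$ yields $|t|^{-\delta}|O_t|\leq n^{-\delta}n^{\delta/2}=n^{-\delta/2}\to0$, which is the claim. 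As elsewhere in this paper, the exceptional null set is discarded by passing to a $\theta$-invariant full-measure subset of $\Omega$.

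The main obstacle is the moment bound on $M$: the pathwise estimate (2.9) only gives polynomial control $|O_t|\leq K(\omega)(1+|t|)^2$, and temperedness of $O_0$ only yields sub-exponential growth of order $e^{o(|t|)}$, neither of which can beat an arbitrarily small power $|t|^\delta$. It is precisely the Gaussianity of the stationary law---all moments finite, with sub-Gaussian tails for the windowed supremum---that upgrades sub-exponential to genuinely sub-polynomial decay. Should one prefer to avoid Borell--TIS, an alternative is to use the representation $O_t=\int_0^\infty e^{-u}\big(B^H_t-B^H_{t-u}\big)\,du$ and bound the moments of the fractional increments directly via a Garsia--Rodemich--Rumsey or Kolmogorov-type estimate, but this route is more computational and the stationarity argument is cleaner.
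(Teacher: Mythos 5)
Your proof is sound, but there is nothing in the paper to compare it against: the paper does not prove this lemma at all, it imports it verbatim as Lemma 3.4 of \cite{MR4549839}, so your argument stands as a self-contained replacement for that citation rather than an alternative to an in-paper proof. Checking it on its own merits, every step holds. The shift identity $O_t(\omega)=O_0(\theta_t\omega)$ is correct: the additive constant $B^H_t(\omega)$ has zero increments, so it drops out of the pathwise Young integral, and the substitution $r=s+t$ is legitimate because the integrand $e^s$ is smooth. On $[0,1]$ the process $O$ is centered Gaussian with continuous paths and $\sup_{s\in[0,1]}\mathrm{Var}(O_s)=\mathrm{Var}(O_0)<\infty$ by stationarity, so Borell--TIS applies and gives $\mathbb{E}[M^p]<\infty$ for every $p$ (Fernique's theorem, which even yields $\mathbb{E}\,e^{\epsilon M^2}<\infty$, would serve equally well and is the tool classically invoked in this literature). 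Measure preservation of $\theta$ makes the events $\{M(\theta_n\cdot)>n^{\delta/2}\}$ identically distributed, Markov's inequality with $p\delta/2>1$ makes their probabilities summable, and the floor-function step $|t|^{-\delta}|O_t|\le n^{-\delta}M(\theta_{n}\omega)\le n^{-\delta/2}$ with $n=\lfloor t\rfloor$ is right. Two points are worth making explicit if you write this up: first, the bound $|O_t|\le M(\theta_{\lfloor t\rfloor}\omega)$ silently uses $O_{n+s}(\omega)=O_s(\theta_n\omega)$, i.e., the same shift identity applied once more, so state it as such; second, the full-measure set on which the conclusion holds is genuinely $\theta$-invariant, since $t^{-\delta}|O_t(\theta_s\omega)|=\frac{(t+s)^{\delta}}{t^{\delta}}\,(t+s)^{-\delta}|O_{t+s}(\omega)|\to 0$, which is what licenses discarding the null set in keeping with the paper's convention of shrinking $\Omega$ to invariant full-measure subsets.
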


\section{Transformation between SDE and RDE}
In this section, we will use the fractional O-U process to constuct a transformation and it transforms the original equation into a new one which is easier to study. And then we will show that these two equations are equivalent.

Consider the stochastic differential equation (SDE for simplicity) in $\mathbb{R}^d$:
\begin{equation}
    dX_t=f(X_t)dt+(aX_t+b)dB^H_t,
\end{equation}
where $B^H_t$ is a two-sided one-dimensional fractional Brownian motion with Hurst paremeter $H>\frac{1}{2}$, $a\in\mathbb{R}, b\in\mathbb{R}^d$ and the function $f$ is a linear growth function which satisfying one-sided dissipative Lipschitz condition (1.3) and integrability condition (1.4).

We suppose $a\neq0$, define a map$$\mathcal{T}: X_t\mapsto e^{-aO_t}(X_t+\frac{b}{a})-\frac{b}{a},$$
where
$$dO_t=-O_tdt+dB^H_t,$$
$$O_t=e^{-t}\int^t_\infty{e^s}d{B^H_s}.$$

Denote
\begin{equation}
    U_t=e^{-aO_t}(X_t+\frac{b}{a})-\frac{b}{a}.
\end{equation}

Using this transformation we get random differential equation (RDE) from (3.1):
\begin{equation}
    \frac{dU}{dt}:=e^{-aO_t}f(U_te^{aO_t}+\frac{b}{a}(e^{aO_t}-1))+(a U_t+b)O_t.
\end{equation}

This transformation, although it seems reasonable, is in fact needed to verify that the derivation of functions with respect to O-U processes (or, more generally, stochastic integrals with respect to fBm) is capable of applying the chain rule. Before proceeding, we assume that $X_t$ and $O_t$ exhibit $\alpha$-H\"{o}lder continuity on the interval $[0,T]$, where $T>0$. We will provide a proof of this assumption at the end of this section. Now, we attempt to verify this $Young$ integral equation:
$$U_t-U_0=\int^t_0e^{-aO_r}dX_r-\int^t_0a(X_r+\frac{b}{a})e^{-aO_r}dO_r.$$
We informly rewrite it as differential form:
\begin{align}
        dU_t=\ e^{-aO_t}dX_t-a(X_t+\frac{b}{a})e^{-aO_t}dO_t.
\end{align}

For simplicity, let
    $$\widetilde{U}_t:=e^{-O_t}X_t.$$

To prove (3.4), it is sufficient to show that
\begin{equation}
    d\widetilde{U}_t=e^{-O_t}dX_t-e^{-O_t}X_tdO_t.
\end{equation}

In fact, take $t>0$,
\begin{align*}
    \widetilde{U}_t-\widetilde{U}_0
    &=e^{-O_t}X_t-e^{-O_0}X_0 \\
    &=\sum_{i=1}^{m}(e^{-O_{t_i}}X_{t_i}-e^{-O_{t_{i-1}}}X_{t_{i-1}}) \\
    &=\sum_{i=1}^{m}[e^{-O_{t_i}}(X_{t_i}-X_{t_{i-1}})+X_{t_{i-1}}(e^{-O_{t_i}}-e^{-O_{t_{i-1}}})] \\
    &=\sum_{i=1}^{m}e^{-O_{t_i}}(X_{t_i}-X_{t_{i-1}})+\sum_{i=1}^{m}X_{t_{i-1}}(e^{-O_{t_i}}-e^{-O_{t_{i-1}}}) \\
    &=: I_m+J_m,
\end{align*}
where $0=t_0<t_1<\cdots <t_{m-1}<t_m=t$ be any partition of the interval $[0,t]$.
\begin{align*}
    I_m
    &:=\sum_{i=1}^{m}e^{-O_{t_i}}(X_{t_i}-X_{t_{i-1}}) \\
    &=\sum_{i=1}^{m}e^{-O_{t_{i-1}}}(X_{t_i}-X_{t_{i-1}})+\sum_{i=1}^{m}(e^{-O_{t_i}}-e^{-O_{t_{i-1}}})(X_{t_i}-X_{t_{i-1}}).
\end{align*}

From the $\alpha$-H\"{o}lder continuity of $O_t$ and $X_t$, where $\frac{1}{2}<\alpha< H$, it is easy to obtain that
$$ \lim _{m\rightarrow\infty}\sum_{i=1}^{m}(e^{-O_{t_i}}-e^{-O_{t_{i-1}}})(X_{t_i}-X_{t_{i-1}})=0, $$
by the definition of $Young$ integral, $\lim _{m\rightarrow \infty}I_m=\int^t_0e^{-O_r}dX_r$.

On the other hand, using Taylor's expansion we can get that
\begin{align*}
 J_m
 &:=\sum_{i=1}^{m}X_{t_{i-1}}(e^{-O_{t_i}}-e^{-O_{t_{i-1}}}) \\
 &=\sum_{i=1}^{m}X_{t_{i-1}}[-e^{-O_{t_{i-1}}}(O_{t_i}-O_{t_{i-1}})+\frac{1}{2}e^{-O_{\tau}}(O_{t_i}-O_{t_{i-1}})^2], \ \ \tau\in(t_{i-1},t_i),
\end{align*}
where the second term of the series sum converges to 0 as $m\rightarrow\infty$, hence by the definition of $Young$ integral, $$\lim _{m\rightarrow \infty}J_m=\int^t_0X_r(-e^{-O_r})dO_r.$$

To summarize, we obtain that
$$\widetilde{U}_t-\widetilde{U}_0=\int^t_0e^{-O_r}dX_r+\int^t_0X_r(-e^{-O_r})dO_r,$$
and thus (3.5) is proved.

Use the similar method we can get (3.4). It follows that
\begin{align}
    U_t-U_0=&\int^t_0e^{-aO_s}dX_s-\int^t_0(aX_s+b)e^{-aO_s}dO_s \notag \\
    =&\int^t_0e^{-aO_s}f(X_s)ds+\int^t_0e^{-aO_s}(aX_s+b)dB^H_s \notag \\
    &-\int^t_0e^{-aO_s}(aX_s+b)(-O_s)ds-\int^t_0e^{-aO_s}(aX_s+b)dB^H_s \notag \\
    =&\int^t_0e^{-aO_s}[f(X_s)+(aX_s+b)O_s]ds \notag \\
    =&\int^t_0[e^{-aO_s}f(U_se^{aO_s}+\frac{b}{a}(e^{aO_s}-1))+(aU_s+b)O_s]ds.
\end{align}

According to the validity of the integral form (3.6), we naturally obtain the differential form (3.3).
\begin{remark}
    In the derivation above, it is worth noting that we have employed a property like:
    $$\int^t_0dX_s=\int^t_0f(X_s)ds+\int^t_0(aX_s+b)dB^H_s$$
    from (3.1). In fact, this can be readily verified through the definition of integration as below:
    \begin{align}
        \int^t_0dX_s&=\lim_{n\rightarrow\infty}\sum^n_{i=1}(X_{t_i}-X_{t_{i-1}})  \notag \\
        &=\lim_{n\rightarrow\infty}\sum^n_{i=1}\left( \int^{t_i}_{t_{i-1}}f(X_r)dr+\int^{t_i}_{t_{i-1}}(aX_r+b)dB^H_r \right) \notag \\
        &=\lim_{n\rightarrow\infty}\sum^n_{i=1}\int^{t_i}_{t_{i-1}}f(X_r)dr+\lim_{n\rightarrow\infty}\sum^n_{i=1}\left( \lim_{m\rightarrow\infty}\sum^m_{j=1}(aX_{t_{j-1}}+b)(B^H_{t_j}-B^H_{t_{j-1}}) \right)  \notag \\
        &=:\lim_{n\rightarrow\infty}I_n+\lim_{n\rightarrow\infty}\lim_{m\rightarrow\infty}J_{n,m} \notag
        \end{align}
        where $0=t_0<t_1<\cdots <t_n=t$ be any partition of the interval $[0,t]$ and $t_{i-1}=t_{j_0}<t_{j_1}<\cdots <t_{j_m}=t_i$ be any partition of the interval $[t_{i-1},t_i]$. Due to the additivity of integration,
        $$\lim_{n\rightarrow\infty}I_n=\int^t_0f(X_s)ds,$$
        and by the countability of natural numbers and the definition of $Young$ integral,
        $$\lim_{n\rightarrow\infty}\lim_{m\rightarrow\infty}J_{n,m}=\int^t_0(aX_s+b)dB^H_s.$$

\end{remark}
\vspace{11pt}
Next we discuss the equivalence of (3.1) and (3.3).
\begin{theorem}
    Let $U_t=e^{-aO_t}(X_t+\frac{b}{a})-\frac{b}{a}$. For any bounded time interval $[T_1,T_2]\subset\mathbb{R}$, the following statements are equivalent:
    \begin{enumerate}
    \item[(i)] $X_t\in C^{\alpha}([T_1,T_2],\mathbb{R}^d)$ satisfying (3.1).
    \item[(ii)]  $U_t\in C^{\alpha}([T_1,T_2],\mathbb{R}^d)$ satisfying (3.3).
    \end{enumerate}
\end{theorem}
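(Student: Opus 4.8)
The plan is to prove the two implications (i) $\Rightarrow$ (ii) and (ii) $\Rightarrow$ (i) in turn, each time separating the Hölder-regularity assertion from the equation-matching assertion, and using the Young-integral chain rule verified earlier in this section as the main tool.

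I would first dispose of the regularity transfer, which is symmetric in both directions. On the bounded interval $[T_1,T_2]$ the fractional O-U process $O_t$ is $\alpha$-Hölder continuous, hence bounded, so $x\mapsto e^{\pm a x}$ is Lipschitz on the compact range of $O$ and therefore $t\mapsto e^{\pm aO_t}$ is itself $\alpha$-Hölder. Since a product of two bounded $\alpha$-Hölder functions is again $\alpha$-Hölder, the identity $U_t = e^{-aO_t}(X_t+\frac{b}{a})-\frac{b}{a}$ shows $X_t\in C^\alpha \Rightarrow U_t\in C^\alpha$, and the inverse relation $X_t = e^{aO_t}(U_t+\frac{b}{a})-\frac{b}{a}$ gives the reverse implication. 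This settles the membership parts of both (i) and (ii).

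For (i) $\Rightarrow$ (ii), assuming $X_t$ is $\alpha$-Hölder and solves (3.1), I would simply invoke the computation leading to (3.6): the chain rule gives $dU_t = e^{-aO_t}dX_t - (aX_t+b)e^{-aO_t}dO_t$, then Remark 3.1 expands $dX_s$ and the O-U relation $dO_s = -O_s\,ds + dB^H_s$ makes the two $dB^H_s$ integrals cancel, leaving the integral form of (3.3); differentiating the continuous integrand yields (3.3). For the converse, I would run the same machinery backwards from the inverse transformation: applying the chain rule to $X_t = e^{aO_t}(U_t+\frac{b}{a})-\frac{b}{a}$ gives $dX_t = e^{aO_t}dU_t + a(U_t+\frac{b}{a})e^{aO_t}dO_t$; substituting (3.3) for $dU_t$ and using the identities $U_te^{aO_t}+\frac{b}{a}(e^{aO_t}-1)=X_t$ and $a(U_t+\frac{b}{a})e^{aO_t}=aX_t+b$, the drift collapses to $f(X_t)\,dt + (aX_t+b)O_t\,dt + (aX_t+b)\,dO_t$, and inserting $dO_t = -O_t\,dt + dB^H_t$ cancels the $(aX_t+b)O_t\,dt$ terms to recover exactly (3.1).

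The main obstacle is not the chain rule — already established — but making the manipulations of the integrator $O_t$ rigorous at the level of Young integrals rather than formal differentials. Concretely, I must justify the linear splitting $\int_0^t (aX_s+b)\,dO_s = -\int_0^t (aX_s+b)O_s\,ds + \int_0^t (aX_s+b)\,dB^H_s$ coming from $dO_s = -O_s\,ds + dB^H_s$, and confirm that the Young integral $\int_0^t (aX_s+b)\,dB^H_s$ exists; the latter holds because $aX_s+b$ is $\alpha$-Hölder and $B^H$ is $\beta$-Hölder with $\alpha+\beta>1$ attainable for $\alpha,\beta\in(\frac{1}{2},H)$. The remaining care is in the passage between integral and differential forms, which is legitimate here since all integrands are continuous.
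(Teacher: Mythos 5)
Your proposal is correct and follows essentially the same route as the paper: transfer the $\alpha$-H\"older regularity through the (locally Lipschitz) exponential factors $e^{\pm aO_t}$, then apply the Young-integral chain rule (3.4) in each direction and use $dO_t=-O_t\,dt+dB^H_t$ to cancel the $dB^H_t$ (respectively the $O_t\,dt$) terms, exactly as in the paper's computation (3.6) and its reversed counterpart. Your explicit justification of the linear splitting of $\int_0^t(aX_s+b)\,dO_s$ and of the existence of the Young integral against $B^H$ is the same point the paper settles when deriving (3.6) and in Remark 3.1, so no genuinely new ingredient is involved.
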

\begin{proof}
First we assume that (i) holds, i.e., $X_t\in C^{\alpha}([T_1,T_2],\mathbb{R}^d)$, combining with the $\alpha$-H\"{o}lder continuity of $O_t$ and $X_t$, then
\begin{align*}
    \left \|U_t-U_s\right \|
    &=\left \|e^{-aO_t}(X_t+\frac{b}{a})-e^{-aO_s}(X_s+\frac{b}{a})\right \| \\
    &=\left \|e^{-aO_t}(X_t-X_s)+(X_s+\frac{b}{a})(e^{-aO_t}-e^{-aO_s})\right \| \\
    &\leq |e^{-aO_t}|\left \|X_t-X_s\right \|+\left \|X_s+\frac{b}{a}\right \|\sup_{s\leq r\leq t}|ae^{-aO_r}||O_t-O_s| \\
    &\leq C|t-s|^{\alpha},
\end{align*}
that is $U_t\in C^{\alpha}([T_1,T_2],\mathbb{R}^d)$.

Since $dX_t=f(X_t)dt+(aX_t+b)dB^H_t$, using (3.2) and (3.4) we can conclude that
\begin{align*}
    dU_t
    &=d(e^{-aO_t}(X_t+\frac{b}{a})-\frac{b}{a}) \\
    &=e^{-aO_t}dX_t-a(X_t+\frac{b}{a})e^{-aO_t}dO_t \\
    &=[e^{-aO_t}f(U_te^{aO_t}+\frac{b}{a}(e^{aO_t}-1))+(aU_t+b)O_t]dt,
\end{align*}
thus (ii) is proved.

In turn, we assume that (ii) holds. Similarly it can be deduced that $X_t\in C^{\alpha}([T_1,T_2],\mathbb{R}^d)$, and
\begin{align*}
    dX_t
    &=d(e^{aO_t}(U_t+\frac{b}{a})-\frac{b}{a}) \\
    &=e^{aO_t}dU_t+a(U_t+\frac{b}{a})e^{aO_t}dO_t \\
    &=f(X_t)dt+(aX_t+b)dB^H_t.
\end{align*}

That completes the proof of the theorem.
\end{proof}

Now we discuss the $\alpha$-H\"oler continuity of $X_t$. According to the equivalence stated in Theorem 3.1, we only need to prove the $\alpha$-H\"oler continuity of $U_t$. From (3.3), we have
\begin{align}
    U_t-U_s=\int^t_s[e^{-aO_r}f(U_re^{aO_r}+\frac{b}{a}(e^{aO_r}-1))+(a U_r+b)O_r]dr. \notag
\end{align}

By the linear growth condition of $f$ , it follows that
\begin{align}
    \left \|U_t-U_s\right \|&\leq\int^t_s[e^{-aO_r}\left\|f(U_re^{aO_r}+\frac{b}{a}(e^{aO_r}-1))\right\|+\left\|a U_r+b\right\| |O_r|]dr \notag \\
    &\leq \int^t_s[e^{-aO_r}(l\left\|U_re^{aO_r}+\frac{b}{a}(e^{aO_r}-1)\right\|+M)+\left\|a U_r+b\right\| |O_r|]dr \notag \\
    &\leq \int^t_s e^{-aO_r}|M+l\frac{\left \| b \right \|}{a}|dr+\int^t_s(|\frac{l}{a}|+|O_r|)\left \| aU_r+b \right \|dr.\notag
\end{align}

From Lemma 2.3, we have that $|O_r|\leq K(\omega)(1+\max\{|t|,|s|\})^2=:\Tilde{C}_{\omega,t,s}$ on any finite interval $[s,t]$, then
\begin{align}
    \left \| U_t-U_s \right \|\leq |M+l\frac{\left \| b \right \|}{a}|e^{-a\Tilde{C}_{\omega,t,s}}(t-s)+(|\frac{l}{a}|+\Tilde{C}_{\omega,t,s})\int^t_s\left \| aU_r+b \right \|dr.\notag
\end{align}

The second term on the right hand side is bounded on any finite time interval (see\cite{MR4549839}). For simplicity, we use a symbol $\hat{C}$ to cover all the constants appeared above, then we have $\left \| U_t-U_s \right \|\leq \hat{C}(t-s)$, which reveals the $\alpha$-H\"older continuity of $U_t$.

\section{Stability test of the uncoupled system}
In this section, we will prove the uncoupled SDEs (4.1) has a unique stochastic stationary solution.
\begin{align}
    dX_t=f(X_t)dt+(a_1X_t+b_1)dB^{H_2}_t,  \notag\\
    dY_t=g(Y_t)dt+(a_2Y_t+b_2)dB^{H_1}_t.
\end{align}

According to (3.3), the corresponding RDEs are:
\begin{align}
    \frac{dU}{dt}=e^{-a_1O^1_t}f(U_te^{a_1O^1_t}+\frac{b_1}{a_1}(e^{a_1O^1_t}-1))+(a_1U_t+b_1)O^1_t,  \notag \\
    \frac{dV}{dt}=e^{-a_2O^2_t}g(V_te^{a_2O^2_t}+\frac{b_2}{a_2}(e^{a_2O^2_t}-1))+(a_2V_t+b_2)O^2_t.
\end{align}

 We apply the method in \cite{MR3869887} and \cite{MR2399931} , but the noise employed here is a fractional Brownian motion. To ensure the existence and uniqueness of solutions, we suppose $f,g$ satisfy the sub-linear growth conditions and satisfy one-sided dissipative Lipschitz conditions
 \begin{align}
     \left \langle  x_1-x_2, f(x_1)-f(x_2) \right \rangle \leq -L\left \| x_1-x_2 \right \|^2, \notag \\
     \left \langle  y_1-y_2, g(y_1)-g(y_2) \right \rangle \leq -L\left \| y_1-y_2 \right \|^2,
 \end{align}
 on $\mathbb{R}^d$ for some $L>0$.

 From now on, we only consider the single equation.
    \begin{equation}
        dX_t=f(X_t)dt+(aX_t+b)dB^H_t.
    \end{equation}

    According to Theorem3.1, we next only need to prove the stationary of the solutions of RDE:
    \begin{equation}
        \frac{dU}{dt}=e^{-aO_t}f(U_te^{aO_t}+\frac{b}{a}(e^{aO_t}-1))+(aU_t+b)O_t,
    \end{equation}
    where
    $$O_t=e^{-t}\int^t_\infty{e^s}d{B^H_s}.$$

    The vector-field function
    $$\widetilde{f}(u,z)=e^{-az}f(ue^{az}+\frac{b}{a}(e^{az}-1))$$
    satisfies the one-sided Lipschitz condition with the first variable uniformly in the second variable. In fact,
    \begin{align*}
        \left \langle u_1-u_2, \widetilde{f}(u_1,z)-\widetilde{f}(u_2,z) \right \rangle &= \left \langle u_1-u_2, e^{-az}f(u_1e^{az}+\frac{b}{a}(e^{az}-1))-e^{-az}f(u_2e^{az}+\frac{b}{a}(e^{az}-1))\right \rangle \\
        &=e^{-2az}\left \langle e^{az}u_1-e^{az}u_2, f(u_1e^{az}+\frac{b}{a}(e^{az}-1))-f(u_2e^{az}+\frac{b}{a}(e^{az}-1))\right \rangle \\
        &\leq-L\left \| u_1-u_2 \right \|^2,
    \end{align*}
    the constant $L$  is the same as the Lipschitz constant for $f$.

    Thus we could know that any two solutions of RDE (4.5) satisfy
    \begin{align}
        \frac{d}{dt}\left \| U_1(t)-U_2(t) \right \|^2&=2\left \langle U_1(t)-U_2(t), \frac{d}{dt}U_1(t)-\frac{d}{dt}U_2(t)\right \rangle  \notag \\
        &\leq2(-L+aO_t)\left \| U_1(t)-U_2(t) \right \|^2.
    \end{align}

    Using the Gronwall's inequality, we have
    $$\left \| U_1(t)-U_2(t) \right \|^2\leq e^{-2t(L-\frac{1}{t}\int^t_0\alpha O_\tau d\tau)}\left \| U_1(0)-U_2(0) \right \|^2.$$

    Since fBm has polynomial growth, review to Lemma 2.4, that is
    \begin{equation}
        lim_{t\rightarrow\infty}\frac{1}{t}\int^t_0O(\theta_s\omega)ds=0,
    \end{equation}
    then we obtain
    $$lim_{t\rightarrow\infty}\left \| U_1(t)-U_2(t) \right \|^2=0,$$
    which means all solutions converge pathwise to each other.

    Note that the solution of (4.5) generates a random dynamical system $\psi=\psi(t,\omega,U_0):=U(t,\omega)$ over $(\Omega,\mathcal{F},\mathbb{P},(\theta_t)_{t\in\mathbb{R}})$ with state space $\mathbb{R}^d$, where $U_0$ is the initial value of the solution at time $t=0$. Then we need to show that the random attractors of $\psi$ are singleton sets. Omitting $\omega$ and $t$ for brevity, we have pathwise
    \begin{align}
        \frac{d}{dt}\left \| U \right \|^2&=2\left \langle U,\frac{d}{dt}U \right \rangle \notag\\
        &=2\left \langle U,e^{-aO_t}f(Ue^{aO_t}+\frac{b}{a}(e^{aO_t}-1))+(aU+b)O_t \right \rangle \notag\\
        &=2e^{-2aO_t}\left \langle Ue^{aO_t},f(Ue^{aO_t}+\frac{b}{a}(e^{aO_t}-1))-f(\frac{b}{a}(e^{aO_t}-1)) \right \rangle  \notag\\
        &+2e^{-aO_t}\left \langle U,f(\frac{b}{a}(e^{aO_t}-1)) \right \rangle+2a\left \| U \right \|^2O_t+2b\left \langle U,O_t \right \rangle \notag\\
        &\leq-2Le^{-2aO_t}\left \| Ue^{aO_t} \right \|^2+2aO_t\left \| U \right \|^2+2\left \langle U,e^{-aO_t}f(\frac{b}{a}(e^{aO_t}-1))+bO_t \right \rangle \notag\\
        &\leq (-L+2aO_t)\left \| U \right \|^2+\frac{1}{L}\left \| e^{-aO_t}f(\frac{b}{a}(e^{aO_t}-1))+bO_t \right \|^{2} \notag\\
        &\leq (-L+2aO_t)\left \| U \right \|^2+\frac{2}{L}(e^{-2aO_t}\left \| f(\frac{b}{a}(e^{aO_t}-1)) \right \|^{2}+\left \| bO_t \right \|^{2}).
    \end{align}

    By the Gronwall's inequality, one sees that
$$\left \| U_t \right \|^{2}\leq\left \| U_0 \right \|^{2}e^{-L(t-t_0)+2\int^t_{t_0}aO_\tau d\tau}+\frac{2}{L}\int^t_{t_0}(e^{-2a O_\tau}\left \| f(\frac{b}{a}(e^{aO_\tau}-1))  \right \|^{2}+\left \| b O_\tau \right \|^{2})e^{-L(t-\tau)+2\int^t_\tau aO_s ds}d\tau.$$

Use (4.7), we obtain that
$$e^{2\int^t_saO_\tau d\tau}\leq e^{\frac{L}{2}(t-s)},$$
when $s\leq 0\leq t$ and $\left | t \right |,\left | s \right |>T=T(\omega)$. Hence
$$\left \| U_t \right \|^{2}\leq\left \| U_0 \right \|^{2}e^{-\frac{L}{2}(t-t_0)}+\frac{2}{L}\int^t_{t_0}(e^{-2aO_\tau}\left \| f(\frac{b}{a}(e^{aO_\tau}-1))  \right \|^{2}+\left \| bO_\tau \right \|^{2})e^{-\frac{L}{2}(t-\tau)}d\tau$$
for $t_0\leq 0\leq t$ and $\left | t \right |,\left | t_0 \right |>T(\omega)$.

From Lemma 2.5 we can easily deduce the sub-exponential growth of $e^{aO_t}$. Combining with the integrability condition (1.4), we must have the integral pathwise
$$\frac{2}{L}\int^t_{-\infty}(e^{-2aO_\tau}\left \| f(\frac{b}{a}(e^{aO_\tau}-1))  \right \|^{2}+\left \| bO_\tau \right \|^{2})e^{-\frac{L}{2}(t-\tau)}d\tau<\infty.$$

Now we can use pullback convergence (i.e. with $t_0\rightarrow-\infty$) in the sense of pathwise to show that the closed ball centered at the origin with random radius
$$R^2(\omega):=1+\frac{2}{L}\int^0_{-\infty}(e^{-2a O_\tau(\omega)}\left \| f(\frac{b}{a}(e^{a O_\tau(\omega)}-1))  \right \|^{2}+\left \| b O_\tau(\omega) \right \|^{2})e^{L\tau+2\int^0_\tau a O_s(\omega) ds}d\tau$$
is a pullback absorbing set for $t>T(\omega)$. The theory of random dynamical system gives us a random attractor $\{\mathcal{A}(\omega),\omega\in\Omega\}$. As we proved above, all trajectories converge to each other forward in time, and thus the sets in this random attractor are singleton sets, i.e. $\mathcal{A}(\omega)=\{a(\omega)\}.$

When we transform (4.2) to (4.1), the SDE has the pathwise singleton set attractor $(a(\theta_t\omega)+\frac{b}{a}e^{O_t(\omega)}-\frac{b}{a})$, which is a stationary solution, since the fractional O-U process is stationary (see \cite{MR2524684}).

\section{Asymptotic behavior of the coupled system}
In this section, we will introduce our goal system and show that the coupled equations have the unique stochastic stationary solutions. These stationary solutions of coupled synchronized system are converge to each other while coupling intensity parameter $\kappa$ tend to infinity.

Consider the coupled RDEs:
\begin{equation}
\left\{\begin{array}{l}
\frac{dU}{dt}=F(U,O^1_t)+\kappa(V-U),\\
\frac{dV}{dt}=G(V,O^2_t)+\kappa(U-V),
\end{array}\right.
\end{equation}
with $\kappa>0$, and
$$F(U,O^1_t)=e^{-a_1O^1_t}f(Ue^{a_1 O^1_t}+\frac{b_1}{a_1}(e^{a_1 O^1_t}-1))+(a_1 U+b_1)O^1_t,$$
$$G(V,O^2_t)=e^{-a_2O^2_t}g(Ve^{a_2 O^2_t}+\frac{b_2}{a_2}(e^{a_2 O^2_t}-1))+(a_2 V+b_2)O^2_t,$$
where $f,g$ satisfy linear growth conditions (1.2), one-sided dissipative Lipschitz conditions (1.3) and integrability conditions (1.4). $O^1_t,O^2_t$ are fractional Ornstein-Uhlenbeck processes satisfying
$$dO^1_t=-O^1_tdt+dB^{H_1}_t,\ \ dO^2_t=-O^2_tdt+dB^{H_2}_t,$$
respectively, $B^{H_1}_t$ and $B^{H_2}_t$ are one-dimensional two-sided fractional Brownian motions with Hurst parameter $H_1,H_2\in(\frac{1}{2},1)$. $a_1,a_2\in\mathbb{R}$ and $b_1,b_2\in\mathbb{R}^d$. Since the general linear multiplicative noise is considered to be studied in this paper, we set $a_1,a_2\neq0$.

Similarly to (4.6), we can deduce that
\begin{align}
     &\frac{d}{dt}\left \| U_1(t)-U_2(t) \right \|^2\leq(-2L-\kappa+2a_1O^1_t)\left \| U_1(t)-U_2(t) \right \|^2+\kappa\left \| V_1(t)-V_2(t) \right \|^{2}, \notag\\
     &\frac{d}{dt}\left \| V_1(t)-V_2(t) \right \|^2\leq(-2L-\kappa+2a_2O^2_t)\left \| V_1(t)-V_2(t) \right \|^2+\kappa\left \| U_1(t)-U_2(t) \right \|^{2}.
\end{align}

Define the matrix
$$A_\kappa(t)=\begin{pmatrix}
-2L-\kappa+2a_1O^1_t & \kappa\\
\kappa & -2L-\kappa+2a_2O^2_t
\end{pmatrix}, \ \ t\in\mathbb{R},
$$
and vector
$$m(t)=\begin{pmatrix}
\left \| U_1(t)-U_2(t) \right \|^2\\
\left \| V_1(t)-V_2(t) \right \|^2
\end{pmatrix}, \ \ t\in\mathbb{R},
$$
then (5.2) can rewrite as
\begin{equation}
    \dot{m}(t)\leq A_\kappa(t)m(t),\ \ m(t)\in\mathbb{R}^2. \notag
\end{equation}

Since the off-diagonal entries of matrix $A_\kappa(t)$ are positive and independent of $t\in\mathbb{R}$, see Corollary 1.5.2 in \cite{MR0379933}, we obtain that
\begin{equation}
    m(t)\leq e^{\int^t_0A_\kappa(\tau)d\tau}m(0)
\end{equation}
componentwise.

Now we need the following lemma, see \cite{MR2399931}.
\begin{lemma}
    As $A_\kappa(t)$ is described above, there exists $T(\omega)>0$ such that for $t\geq T(\omega)$, and for all $\kappa>1$ we have
    $$\left \| e^{\int^t_0A_\kappa(\tau)d\tau}m \right \|\leq e^{-Lt}\left \| m \right \|, \ \ m\in\mathbb{R}^2.$$
\end{lemma}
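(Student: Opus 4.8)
The plan is to exploit the symmetry of the coefficient matrix. Since the off-diagonal entries of $A_\kappa(\tau)$ are both equal to $\kappa$, each $A_\kappa(\tau)$ is symmetric, and hence so is the integrated matrix
$$B(t):=\int_0^t A_\kappa(\tau)\,d\tau=\begin{pmatrix} -2Lt-\kappa t+2a_1 P_1(t) & \kappa t\\ \kappa t & -2Lt-\kappa t+2a_2 P_2(t)\end{pmatrix},$$
where I abbreviate $P_i(t):=\int_0^t O^i_\tau\,d\tau$. Because $B(t)$ is symmetric it is orthogonally diagonalizable, so $e^{B(t)}$ is symmetric positive definite and its operator norm equals its largest eigenvalue, namely $e^{\lambda_{\max}(B(t))}$. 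It therefore suffices to show $\lambda_{\max}(B(t))\le -Lt$ for all $t\ge T(\omega)$ and all $\kappa>1$, since then $\|e^{B(t)}m\|\le e^{\lambda_{\max}(B(t))}\|m\|\le e^{-Lt}\|m\|$ for every $m\in\mathbb{R}^2$.

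First I would write down the eigenvalues of the symmetric $2\times2$ matrix $B(t)$ explicitly,
$$\lambda_{\max}(B(t))=-2Lt-\kappa t+a_1 P_1(t)+a_2 P_2(t)+\sqrt{\big(a_1 P_1(t)-a_2 P_2(t)\big)^2+\kappa^2 t^2}.$$
The decisive step — the one that makes the estimate uniform in $\kappa$ — is to tame the square root. Applying the elementary inequality $\sqrt{x+y}\le\sqrt{x}+\frac{y}{2\sqrt{x}}$ with $x=\kappa^2 t^2$ and $y=(a_1 P_1-a_2 P_2)^2$ gives
$$\sqrt{(a_1 P_1-a_2 P_2)^2+\kappa^2 t^2}\le \kappa t+\frac{(a_1 P_1-a_2 P_2)^2}{2\kappa t}.$$
Substituting this back, the two $\kappa t$ terms cancel exactly, leaving
$$\lambda_{\max}(B(t))\le -2Lt+a_1 P_1(t)+a_2 P_2(t)+\frac{(a_1 P_1(t)-a_2 P_2(t))^2}{2\kappa t}.$$

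It then remains to show the three correction terms are dominated by fractions of $Lt$ for large $t$, uniformly in $\kappa>1$. By Lemma 2.4 (applied to each fractional O-U process $O^1,O^2$) we have $\frac{1}{t}P_i(t)\to 0$ as $t\to\infty$, so $a_1 P_1(t)+a_2 P_2(t)\le\frac{L}{4}t$ once $t$ exceeds some $T_1(\omega)$. For the last term, since $\kappa>1$ I bound $\frac{1}{2\kappa t}\le\frac{1}{2t}$ — this is exactly where the worst case $\kappa\downarrow 1$ enters and where uniformity in $\kappa$ is won — and then $\frac{(a_1 P_1-a_2 P_2)^2}{2t}=\frac{t}{2}\big(\frac{a_1 P_1-a_2 P_2}{t}\big)^2\le\frac{L}{4}t$ for $t\ge T_2(\omega)$, again by Lemma 2.4. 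Setting $T(\omega)=\max\{T_1(\omega),T_2(\omega)\}$ yields $\lambda_{\max}(B(t))\le -2Lt+\frac{L}{2}t\le -Lt$, which finishes the argument.

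The only genuine obstacle is the $\kappa^2 t^2$ term inside the square root: a naive triangle-inequality bound $\sqrt{(\cdots)^2+\kappa^2 t^2}\le|a_1 P_1-a_2 P_2|+\kappa t$ would still leave an uncancelled $\kappa t$ and wreck the estimate. The right move is the refined bound $\sqrt{x+y}\le\sqrt{x}+\frac{y}{2\sqrt{x}}$, which isolates the leading $\kappa t$ for exact cancellation and relegates the coupling contribution to the lower-order term $\frac{(\cdots)^2}{2\kappa t}$ that the sublinearity of $\int_0^t O^i_\tau\,d\tau$ from Lemma 2.4 then kills. Everything downstream is routine bookkeeping once this cancellation is in place.
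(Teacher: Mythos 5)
Your proof is correct and follows essentially the same route as the paper: exploit the symmetry of $\int_0^t A_\kappa(\tau)\,d\tau$, bound the norm of its exponential by $e^{\lambda_{\max}}$, compute the two eigenvalues explicitly, and use Lemma 2.4 to force $\lambda_{\max}\le -Lt$ for $t\ge T(\omega)$; the paper asserts this last eigenvalue bound in a single line, and your argument supplies the missing details. One correction to your closing commentary, however: the ``naive'' bound $\sqrt{(a_1P_1-a_2P_2)^2+\kappa^2t^2}\le |a_1P_1-a_2P_2|+\kappa t$ (with $P_i(t)=\int_0^t O^i_\tau\,d\tau$) does not wreck the estimate --- the $+\kappa t$ it produces cancels the $-\kappa t$ in the leading term $(-2L-\kappa)t$ just as exactly as in your refined version, leaving $\lambda_{\max}\le -2Lt+a_1P_1+a_2P_2+|a_1P_1-a_2P_2|=-2Lt+2\max\{a_1P_1,a_2P_2\}\le -Lt$ for large $t$ by Lemma 2.4, uniformly in all $\kappa>0$. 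So your inequality $\sqrt{x+y}\le\sqrt{x}+y/(2\sqrt{x})$ is valid but is not the decisive ingredient you take it to be; the uniformity in $\kappa$ comes for free from this cancellation, which is presumably why the paper dispenses with the step so briefly.
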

\begin{proof}
    Firstly we note that the matrix $\int^t_0A_\kappa(\tau)d\tau$ is symmetric, so there exists an orthonormal basis of eigenvectors $u^{(1)}_{\kappa,t},u^{(2)}_{\kappa,t}$ with eigenvalues $\lambda^{(1)}_{\kappa,t},\lambda^{(2)}_{\kappa,t}$ such that
    $$e^{\int^t_0A_\kappa(\tau)d\tau}m=e^{\lambda^{(1)}_{\kappa,t}}\cdot u^{(1)}_{\kappa,t}\cdot c^{(1)}_{m,\kappa,t}+e^{\lambda^{(2)}_{\kappa,t}}\cdot u^{(2)}_{\kappa,t}\cdot c^{(2)}_{m,\kappa,t},$$
    where
    $$u^{(1)}_{\kappa,t}\cdot c^{(1)}_{m,\kappa,t}+u^{(2)}_{\kappa,t}\cdot c^{(2)}_{m,\kappa,t}=m.$$

    Consequently we obtain
    \begin{align}
        \left \| e^{\int^t_0A_\kappa(\tau)d\tau}m \right \|^2&=e^{2\lambda^{(1)}_{\kappa,t}}\left \| u^{(1)}_{\kappa,t}\cdot c^{(1)}_{m,\kappa,t} \right \|^{2}+e^{2\lambda^{(2)}_{\kappa,t}}\left \| u^{(2)}_{\kappa,t}\cdot c^{(2)}_{m,\kappa,t} \right \|^{2} \notag \\
        &\leq e^{2\max\{\lambda^{(1)}_{\kappa,t},\lambda^{(2)}_{\kappa,t}\}}\left \| m \right \|^{2}.
    \end{align}

    Note that the eigenvalues of $\int^t_0A_\kappa(\tau)d\tau$ are given by
    $$\lambda^{(1),(2)}_{\kappa,t}=(-2L-\kappa)t+\int^t_0(\alpha_1O^1_t+\alpha_2O^2_t)d\tau\pm\sqrt{(\int^t_0(\alpha_1O^1_t-\alpha_2O^2_t)^2d\tau)^2+\kappa^2t^2}.$$

    Use the property (2.10) of fractional O-U process, it follows that
    \begin{equation}
        \lambda^{(1),(2)}_{\kappa,t}\leq-Lt,
    \end{equation}
    for $\left | t \right |\geq T(\omega)$ and all $\kappa\geq1.$

    Combing (5.4) and (5.5) yields the assertion.
\end{proof}

The above lemma implies that
\begin{equation}
    \lim_{t\rightarrow\infty}\left \| U_1(t)-U_2(t) \right \|^2=\lim_{t\rightarrow\infty}\left \| V_1(t)-V_2(t) \right \|^2=0,
\end{equation}
thus all solutions of the coupled RDEs (5.1) converge pathwise to each other forward in time.

Now we can use the theory of random dynamical systems to show what the solutions of (5.1) will converge to. The solution $\psi(t,\omega,\hat{m}_0):=\hat{m}(t,\omega)=\begin{pmatrix}
U(t,\omega)\\
V(t,\omega)
\end{pmatrix}$ with initial value $\hat{m}_0=\begin{pmatrix}
U(t_0)\\
V(t_0)
\end{pmatrix}$ at time $t=0$ of (5.1) generates a random dynamical ststem over $(\Omega,\mathcal{F},\mathbb{P},(\theta_t)_{t\in\mathbb{R}})$ with state space $\mathbb{R}^{2d}$.

Using the same argument as in the discussion (4.8), omitting $t$ and $\omega$ for brevity, we have
\begin{align}
    \frac{d}{dt}\left \| U \right \|^2&=2\left \langle U,\frac{d}{dt}U \right \rangle \notag\\
        &=2\left \langle U,e^{-a_1 O^1_t}f(Ue^{a_1 O^1_t}+\frac{b_1}{a_1}(e^{a_1 O^1_t}-1))+(a_1 U+b_1)O^1_t+\kappa(V-U) \right \rangle \notag\\
        &\leq(-L-\kappa+2a_1O^1_t)\cdot\left \| U \right \|^{2}+\kappa\left \| V \right \|^{2} \notag \\
        &+\frac{2}{L}(e^{-2a_1O^1_t}\cdot\left \| f(\frac{b_1}{a_1}(e^{a_1 O^1_t}-1)) \right \|^{2}+\left \| b_1O^1_t \right \|^{2}), \notag \\
    \frac{d}{dt}\left \| V \right \|^2&\leq(-L-\kappa+2a_2O^2_t)\cdot\left \| V \right \|^{2}+\kappa\left \| U \right \|^{2} \notag \\
    &+\frac{2}{L}(e^{-2a_2O^2_t}\cdot\left \| g(\frac{b_2}{a_2}(e^{a_2 O^2_t}-1)) \right \|^{2}+\left \| b_2O^2_t \right \|^{2}),  \notag
\end{align}
and then take $$n(t):=\begin{pmatrix}
\left \| U(t) \right \|^{2}\\
\left \| V(t) \right \|^{2}
\end{pmatrix},\ \  \hat{A}_\kappa(t)=\begin{pmatrix}
-L-\kappa+2a_1O^1_t & \kappa\\
\kappa & -L-\kappa+2a_2O^2_t
\end{pmatrix},$$ \\
we have componentwise
\begin{equation}
    n(t)\leq e^{\int^t_{t_0}\hat{A}_\kappa(\tau)d\tau}\cdot n(t_0)+\frac{2}{L}\int^t_{t_0}e^{\int^t_{\tau}\hat{A}_\kappa(s)ds}\begin{pmatrix}
e^{-2a_1O^1_\tau}\cdot\left \| f(\frac{b_1}{a_1}(e^{a_1 O^1_\tau}-1)) \right \|^{2}+\left \| b_1O^1_\tau \right \|^{2}\\
e^{-2a_2O^2_\tau}\cdot\left \| g(\frac{b_2}{a_2}(e^{a_2 O^2_\tau}-1)) \right \|^{2}+\left \| b_2O^2_\tau \right \|^{2}
\end{pmatrix}d\tau.
\end{equation}

By analogy with Lemma 5.1, we have the following lemma:
\begin{lemma}
    Let $t_0\leq0$, $t\geq0$. Then
    $$\left \| e^{\int^t_{t_0}\hat{A}_\kappa(\tau)d\tau}\cdot n \right \|\leq e^{-\frac{L}{2}(t-t_0)}\cdot\left \| n \right \|, \ \ n\in\mathbb{R}^2$$
    for $\left | t_0 \right |, \left | t \right |\geq T(\omega)$ and all $\kappa\geq1.$
\end{lemma}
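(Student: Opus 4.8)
The plan is to follow the proof of Lemma 5.1 essentially line for line, the only structural changes being that integration now runs over $[t_0,t]$ rather than $[0,t]$ and the diagonal of $\hat A_\kappa$ carries $-L-\kappa$ in place of $-2L-\kappa$. First I would note that $\hat A_\kappa(\tau)$ is symmetric for each $\tau$, so $M:=\int^t_{t_0}\hat A_\kappa(\tau)\,d\tau$ is symmetric as well. Diagonalizing $M$ in an orthonormal eigenbasis exactly as in $(5.4)$ gives $\|e^{M}n\|^2\leq e^{2\max\{\mu_1,\mu_2\}}\|n\|^2$, where $\mu_1,\mu_2$ denote the eigenvalues of $M$. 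Thus the entire lemma reduces to the bound $\max\{\mu_1,\mu_2\}\leq-\tfrac{L}{2}(t-t_0)$.

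Next I would compute the eigenvalues. Writing $P_i:=\int^t_{t_0}O^i_\tau\,d\tau$, the matrix $M$ equals $\begin{pmatrix}(-L-\kappa)(t-t_0)+2a_1P_1 & \kappa(t-t_0)\\ \kappa(t-t_0) & (-L-\kappa)(t-t_0)+2a_2P_2\end{pmatrix}$, whose eigenvalues are $\mu_{1,2}=(-L-\kappa)(t-t_0)+(a_1P_1+a_2P_2)\pm\sqrt{(a_1P_1-a_2P_2)^2+\kappa^2(t-t_0)^2}$. Only the larger eigenvalue $\mu_+$ matters; applying $\sqrt{X^2+\kappa^2(t-t_0)^2}\leq|X|+\kappa(t-t_0)$ (legitimate since $t-t_0\geq0$) with $X=a_1P_1-a_2P_2$, the two $\pm\kappa(t-t_0)$ contributions cancel and I am left with $\mu_{+}\leq-L(t-t_0)+(a_1P_1+a_2P_2)+|a_1P_1-a_2P_2|$, a bound that is now completely free of $\kappa$.

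It remains to absorb the O--U integrals into $-L(t-t_0)$. By Lemma 2.4 the ratio $\tfrac1t\int^t_0 O^i_\tau\,d\tau$ tends to $0$, so for any $\epsilon>0$ there is $T(\omega)$ with $|\int^t_0 O^i_\tau\,d\tau|\leq\epsilon|t|$ whenever $|t|\geq T(\omega)$. Splitting $P_i=\int^t_0 O^i_\tau\,d\tau-\int^{t_0}_0 O^i_\tau\,d\tau$ and using $t\geq0$, $t_0\leq0$ yields $|P_i|\leq\epsilon(t-t_0)$ once $|t|,|t_0|\geq T(\omega)$. Hence $(a_1P_1+a_2P_2)+|a_1P_1-a_2P_2|\leq 2(|a_1|+|a_2|)\epsilon(t-t_0)$, and choosing $\epsilon$ so small that $2(|a_1|+|a_2|)\epsilon\leq\tfrac{L}{2}$ gives $\mu_{+}\leq-\tfrac{L}{2}(t-t_0)$. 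Substituting this into the diagonalization bound produces $\|e^{M}n\|^2\leq e^{-L(t-t_0)}\|n\|^2$, which is the claim.

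I expect the only genuinely delicate point to be the very one that already drives Lemma 5.1: the exact cancellation of the two $\kappa(t-t_0)$ terms, which is what makes the final estimate uniform over all $\kappa\geq1$. The mild additional care needed here is that Lemma 2.4 controls the O--U average only relative to the origin, so splitting each $P_i$ at $0$ (permissible precisely because $t_0\leq0\leq t$) is the device that lets me invoke it at both endpoints at once.
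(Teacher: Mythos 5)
Your proof is correct and is precisely the argument the paper intends: the paper offers no separate proof for this lemma beyond the remark ``by analogy with Lemma 5.1,'' and your steps --- symmetry and orthonormal diagonalization of $\int^t_{t_0}\hat{A}_\kappa(\tau)d\tau$, the explicit eigenvalue formula, the bound $\sqrt{X^2+\kappa^2(t-t_0)^2}\leq |X|+\kappa(t-t_0)$ that cancels the $\kappa$-dependence, and Lemma 2.4 to absorb the Ornstein--Uhlenbeck integrals --- are exactly the analogy carried out. Your splitting of $\int^t_{t_0}O^i_\tau d\tau$ at the origin (valid since $t_0\leq 0\leq t$) is the same device the paper itself uses implicitly in Section 4 when passing from (4.7) to the bound $e^{2\int^t_s aO_\tau d\tau}\leq e^{\frac{L}{2}(t-s)}$ for $s\leq 0\leq t$.
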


Now we set
$$C_\kappa(\omega):=\frac{2}{L}\int^0_{-\infty}e^{\int^0_{\tau}\hat{A}_\kappa(s)ds}\begin{pmatrix}
e^{-2a_1O^1_\tau}\cdot\left \| f(\frac{b_1}{a_1}(e^{a_1 O^1_\tau}-1)) \right \|^{2}+\left \| b_1O^1_\tau \right \|^{2}\\
e^{-2a_2O^2_\tau}\cdot\left \| g(\frac{b_2}{a_2}(e^{a_2 O^2_\tau}-1)) \right \|^{2}+\left \| b_2O^2_\tau \right \|^{2}
\end{pmatrix}d\tau$$
and define
$$R^2_\kappa(\omega):=1+\left \| C_\kappa(\omega) \right \|^{2}.$$

According to (5.7) and pullback techniques, we see that the random ball $B_\kappa(\omega)$ in $\mathbb{R}^{2d}$ centered on the origin and with radius $R_\kappa(\omega)$ are pullback absorbing.

In addition to this, we note that
$$R_\kappa(\omega)\leq R_1(\omega),\ \ for\ \kappa\geq1,$$
indeed
$$\frac{d}{d\kappa}\left \| C_\kappa(\omega) \right \|^2=2\left \langle C_\kappa(\omega),\frac{d}{d\kappa}C_\kappa(\omega) \right \rangle=2\left \langle C_\kappa(\omega),\begin{pmatrix}
-1 & 1\\
1 & -1
\end{pmatrix}\right \rangle\leq0.$$

Hence the random dynamical system generated by RDEs (5.1) for each $\kappa\geq1$ has a random attractor $\{\mathcal{A}_\kappa(\omega),\omega\in\Omega\}$ in $B_\kappa(\omega)$, more precisely, $\mathcal{A}_\kappa(\omega)\subset B_1(\omega)$ for $\kappa\geq1$. Note that all solutions converge to each other pathwise forward in time. Thus $\mathcal{A}_\kappa(\omega)$ are singleton sets, say $\mathcal{A}_\kappa(\omega)=\{(\Bar{U}_\kappa(\omega),\Bar{V}_\kappa(\omega))\}$.

In order to the next discussion, we now proceed to estimate the difference between the two components of the coupled system(5.1). It can easily be shown that pathwise
\begin{align}
    \frac{d}{dt}\left \| U-V \right \|^2&=2\left \langle U-V,\frac{d}{dt}U-\frac{d}{dt}V \right \rangle \notag \\
    &=2\left \langle U-V,e^{-a_1 O^1_t}f(Ue^{a_1 O^1_t}+\frac{b_1}{a_1}(e^{a_1 O^1_t}-1))-e^{-a_2 O^2_t}g(Ve^{a_2 O^2_t}+\frac{b_2}{a_2}(e^{a_2 O^2_t}-1))\right \rangle \notag \\
    &+2\left \langle U-V,a_1UO^1_t-a_2VO^2_t\right \rangle +2\left \langle U-V,b_1O^1_t-b_2O^2_t\right \rangle+2\left \langle U-V,2\kappa(V-U)\right \rangle \notag \\
    &\leq-4\kappa\left \| U-V \right \|^{2}+2\left \| U-V \right \|\cdot\Big ( e^{-a_1 O^1_t} \left \| f(Ue^{a_1 O^1_t}+\frac{b_1}{a_1}(e^{a_1 O^1_t}-1))\right \| \notag \\
    &+e^{-a_2 O^2_t}\left \| g(Ve^{a_2 O^2_t}+\frac{b_2}{a_2}(e^{a_2 O^2_t}-1)) \right \|+\left \| a_1UO^1_t-a_2VO^2_t \right \|^{2}+\left \| b_1O^1_t-b_2O^2_t \right \|^{2} \Big ) \notag \\
    &\leq-\kappa\left \| U-V \right \|^{2}+\frac{1}{\kappa}e^{-2a_1 O^1_t}\left \| f(Ue^{a_1 O^1_t}+\frac{b_1}{a_1}(e^{a_1 O^1_t}-1))\right \|^2 \notag \\
    &+\frac{1}{\kappa}e^{-2a_2 O^2_t}\left \| g(Ve^{a_2 O^2_t}+\frac{b_2}{a_2}(e^{a_2 O^2_t}-1))\right \|^2 \notag \\
    &+\frac{1}{\kappa}\left \| b_1O^1_t-b_2O^2_t \right \|^{2}+\frac{1}{\kappa}\left | a_1O^1_t \right |^2\left \| U \right \|^{2}+\frac{1}{\kappa}\left | a_2O^2_t \right |^2\left \| V \right \|^{2}.  \notag
\end{align}

Denote
\begin{align}
    M^\kappa_{T_1,T_2,\omega}:=&\sup_{t\in[T_1,T_2]}\left( e^{-2a_1 O^1_t}\left \| f(Ue^{a_1 O^1_t}+\frac{b_1}{a_1}(e^{a_1 O^1_t}-1))\right \|^2+\left | a_1O^1_t \right |^2\left \| U \right \|^{2} \right) \notag \\
    &+\sup_{t\in[T_1,T_2]}\left( e^{-2a_2 O^2_t}\left \| g(Ve^{a_2 O^2_t}+\frac{b_2}{a_2}(e^{a_2 O^2_t}-1))\right \|^2+\left | a_2O^2_t \right |^2\left \| V \right \|^{2} \right) \notag \\
    &+\sup_{t\in[T_1,T_2]}\left( \left \| b_1O^1_t-b_2O^2_t \right \|^{2} \right),  \notag
\end{align}
we have
$$\frac{d}{dt}\left \| U_\kappa-V_\kappa \right \|^2\leq-\kappa\left \| U_\kappa-V_\kappa \right \|^2+\frac{1}{\kappa}M^\kappa_{T_1,T_2,\omega},$$
where the subscript $\kappa$ to indicate this dependence.

Without loss of generality, we can restrict solutions in the compact absorbing balls $B_\kappa(\omega)$ and thereby in the common compact absorbing ball $B_1(\omega)$ for $\kappa\geq1$. Hence $M^\kappa_{T_1,T_2,\omega}$ is uniformly bounded in $\kappa$, and
$$\frac{d}{dt}\left \| U_\kappa-V_\kappa \right \|^2\leq-\kappa\left \| U_\kappa-V_\kappa \right \|^2+\frac{1}{\kappa}M_{T_1,T_2,\omega},$$
where
$$M_{T_1,T_2,\omega}=\sup_{\kappa\leq1}M^\kappa_{T_1,T_2,\omega}.$$

As a result, we conclude that
$$\lim_{\kappa\rightarrow\infty}\left \| U_\kappa(t)-V_\kappa(t) \right \|^2=0,$$
for all $t\in [T_1,T_2]\subseteq \mathbb{R}$.

\section{Synchronization}
The aim of this section is to derive results on the synchronization of the coupled SDE systems as the coupling intensity parameter $\kappa$ tends to infinity. Before approaching this problem, we first pose the problem of finding what the coupled RDE systems (5.1) synchronize with.

\begin{lemma}
    Let $f,g$ satisfy linear growth conditions (1.2), one-sided dissipative Lipschitz conditions (1.3) and integrability conditions (1.4). $O^1_t,O^2_t$ are fractional O-U processes satisfy
    $$dO^1_t=-O^1_tdt+dB^{H_1}_t,\ \ dO^2_t=-O^2_tdt+dB^{H_2}_t,$$
    then the averaged equation
    \begin{align}
            \frac{dW}{dt}=&\frac{1}{2}[e^{-a_1O^1_t}f(e^{a_1O^1_t}W+\frac{b_1}{a_1}(e^{a_1O^1_t}-1))+e^{-a_2O^2_t}g(e^{a_2O^2_t}W+\frac{b_2}{a_2}(e^{a_2O^2_t}-1))] \notag \\
            &+\frac{1}{2}[(a_1W+b_1)O^1_t+(a_2W+b_2)O^2_t]
    \end{align}
has a unique stationary solution $\Bar{W}_t\in C^{\alpha}([T_1,T_2],\mathbb{R}^d)$ for any bounded interval $[T_1,T_2]\subset\mathbb{R}$, where $\alpha< \min\{H_1,H_2\}$.
\end{lemma}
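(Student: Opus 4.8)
The plan is to follow the argument of Section 4 almost verbatim, the only new feature being that the drift of (6.1) is an \emph{average} of two transformed vector fields driven by two independent fractional O-U processes. I would first write the right-hand side of (6.1) as $\bar{F}(W,O^1_t,O^2_t)=\tfrac{1}{2}[\tilde{f}(W,O^1_t)+\tilde{g}(W,O^2_t)]+\tfrac{1}{2}[(a_1W+b_1)O^1_t+(a_2W+b_2)O^2_t]$, where $\tilde{f}(u,z)=e^{-az}f(ue^{az}+\tfrac{b_1}{a_1}(e^{az}-1))$ and $\tilde{g}$ is defined analogously for $g$. The computation carried out below (4.5) shows that $\tilde{f}$ and $\tilde{g}$ each satisfy the one-sided dissipative Lipschitz condition in the first variable, uniformly in the second, with the \emph{same} constant $L$. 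Averaging the two inequalities gives $\langle w_1-w_2,\bar{F}(w_1,z_1,z_2)-\bar{F}(w_2,z_1,z_2)\rangle\leq[-L+\tfrac{1}{2}(a_1z_1+a_2z_2)]\|w_1-w_2\|^2$, so the averaged field inherits dissipativity with constant $L$, the two linear terms contributing the factor $\tfrac{1}{2}(a_1O^1_t+a_2O^2_t)$. Together with the continuity of $t\mapsto(O^1_t,O^2_t)$ and the sub-linear growth of $f,g$, this yields global existence and uniqueness of solutions, hence a random dynamical system $\psi$ over $(\Omega,\mathcal{F},\mathbb{P},\theta)$ on $\mathbb{R}^d$.

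Next I would establish pathwise contraction of any two solutions. As for (4.6), $\tfrac{d}{dt}\|W_1-W_2\|^2\leq 2[-L+\tfrac{1}{2}(a_1O^1_t+a_2O^2_t)]\|W_1-W_2\|^2$, and Gronwall's inequality gives $\|W_1(t)-W_2(t)\|^2\leq\exp\big(-2Lt+\int_0^t(a_1O^1_\tau+a_2O^2_\tau)d\tau\big)\|W_1(0)-W_2(0)\|^2$. Applying Lemma 2.4 to each of $O^1$ and $O^2$ shows $\tfrac{1}{t}\int_0^t(a_1O^1_\tau+a_2O^2_\tau)d\tau\to 0$, so the exponent is eventually negative and $\|W_1(t)-W_2(t)\|^2\to 0$ pathwise; all solutions converge to each other forward in time.

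To identify the limit I would reproduce the absorbing-ball construction of Section 4. Expanding $\tfrac{d}{dt}\|W\|^2=2\langle W,\bar{F}(W,O^1_t,O^2_t)\rangle$ and, in each of the two drift terms, subtracting and adding the value at $W=0$ exactly as in (4.8), Young's inequality yields $\tfrac{d}{dt}\|W\|^2\leq[-L+(a_1O^1_t+a_2O^2_t)]\|W\|^2+Q_t$, where $Q_t$ collects the forcing terms $e^{-2a_1O^1_t}\|f(\tfrac{b_1}{a_1}(e^{a_1O^1_t}-1))\|^2$, $e^{-2a_2O^2_t}\|g(\tfrac{b_2}{a_2}(e^{a_2O^2_t}-1))\|^2$, $\|b_1O^1_t\|^2$ and $\|b_2O^2_t\|^2$. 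The sub-exponential growth of $e^{a_iO^i_t}$ (Lemma 2.5) together with the integrability condition (1.4) guarantees $\tfrac{1}{L}\int_{-\infty}^tQ_\tau\,e^{-\frac{L}{2}(t-\tau)}d\tau<\infty$ pathwise, exactly as in Section 4, so after replacing $e^{2\int_s^t a O_\tau d\tau}$ by $e^{\int_s^t(a_1O^1_\tau+a_2O^2_\tau)d\tau}\leq e^{\frac{L}{2}(t-s)}$ (valid for $|t|,|s|>T(\omega)$ by Lemma 2.4), pullback convergence $t_0\to-\infty$ produces a tempered closed random ball that is pullback absorbing. Theorem 2.1 then gives a random attractor, and by the pathwise contraction just proved it is a singleton $\mathcal{A}(\omega)=\{\bar{W}(\omega)\}$, whence $\bar{W}_t(\omega):=\bar{W}(\theta_t\omega)$ is the unique stationary solution.

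Finally, the regularity follows as at the end of Section 3: from (6.1) and the linear growth of $f,g$, I would bound $\|W_t-W_s\|\leq\int_s^t(\cdots)dr\leq\hat{C}|t-s|$ on any finite interval, using Lemma 2.3 to control $|O^1_r|,|O^2_r|$ there; this Lipschitz estimate gives $\bar{W}_t\in C^{\alpha}([T_1,T_2],\mathbb{R}^d)$ for every $\alpha<1$, and the restriction $\alpha<\min\{H_1,H_2\}$ is simply the natural range of the ambient Hölder space, forced because both $O^1$ (driven by $B^{H_1}$) and $O^2$ (driven by $B^{H_2}$) enter the equation, each being only $\alpha_i$-Hölder for $\alpha_i<H_i$. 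I expect the only genuinely new bookkeeping --- and hence the main obstacle --- to be checking that the time-averaged exponent and the forcing integral stay finite with two \emph{independent} O-U processes acting simultaneously; once Lemmas 2.4 and 2.5 are invoked for each of $O^1,O^2$, the single-noise estimates of Section 4 transfer without essential change.
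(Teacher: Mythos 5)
Your proposal is correct and follows exactly the route the paper intends: the paper itself omits the proof of this lemma, stating only that it is ``similar to Section 4,'' and your argument is a faithful elaboration of that Section 4 scheme (averaged one-sided dissipativity with the same constant $L$, Gronwall plus Lemma 2.4 applied to each of $O^1,O^2$ for pathwise contraction, the pullback absorbing ball via the integrability condition and Lemma 2.5 yielding a singleton attractor, and the Lipschitz-in-time estimate for the H\"older regularity). Nothing in your bookkeeping with the two independent O-U processes deviates from, or goes beyond, what the paper's own omitted argument would require.
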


The proof is similar to Section 4, and is omitted here. It can easily be seen that the random dynamical system $\phi$ generated by $\Bar{W}_t(\omega)$ has a random singleton attractor. Next, we present the synchronization results of coupled system (5.1). Before proceeding, we present the following lemma:
\begin{lemma}
    Let $\{x_n\}$ be a sequence in a complete metric space $(X,d)$ such that every subsequence $\{x_{n_i}\}$ has a subsequence $\{x_{n_{i_j}}\}$ converging to a common limit $x^*$. Then the sequence $\{x_n\}$ converges to $x^*$.
\end{lemma}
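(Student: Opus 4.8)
The plan is to argue by contradiction, exploiting the hypothesis that every subsequence admits a further subsequence converging to the common limit $x^*$. The key observation is that failure of convergence to $x^*$ directly produces a subsequence staying uniformly bounded away from $x^*$, and such a subsequence cannot contain any further subsequence converging to $x^*$.

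More precisely, first I would suppose that $\{x_n\}$ does \emph{not} converge to $x^*$. By the definition of convergence in $(X,d)$, this means there exists some $\epsilon_0>0$ such that for every $N\in\mathbb{N}$ there is an index $n\geq N$ with $d(x_n,x^*)\geq\epsilon_0$. Iterating this selection with strictly increasing thresholds, I would extract indices $n_1<n_2<\cdots$ satisfying
$$d(x_{n_i},x^*)\geq\epsilon_0\qquad\text{for all }i\in\mathbb{N},$$
thereby producing a genuine subsequence $\{x_{n_i}\}$ all of whose terms lie outside the open ball of radius $\epsilon_0$ centered at $x^*$.

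Next I would apply the hypothesis to this particular subsequence $\{x_{n_i}\}$: by assumption it admits a further subsequence $\{x_{n_{i_j}}\}$ converging to $x^*$. On the one hand, convergence forces $d(x_{n_{i_j}},x^*)\to 0$ as $j\to\infty$; on the other hand, every term of this further subsequence inherits the bound $d(x_{n_{i_j}},x^*)\geq\epsilon_0>0$. These two statements are incompatible, which yields the desired contradiction and establishes that $\{x_n\}$ converges to $x^*$.

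I do not expect any genuine obstacle here, since the argument is elementary and does not use completeness beyond the ambient metric structure. The only point requiring a little care is the bookkeeping in the extraction step, namely choosing $N=n_{i-1}+1$ at each stage so that the selected indices $n_i$ are strictly increasing and $\{x_{n_i}\}$ is a legitimate subsequence to which the hypothesis may be applied.
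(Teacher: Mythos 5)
Your proof is correct, and it is the standard argument: negating convergence yields a subsequence uniformly bounded away from $x^*$, which by hypothesis must nevertheless contain a further subsequence converging to $x^*$ --- a contradiction. For comparison, the paper states this lemma without any proof at all (it is invoked only at the end of the proof of the synchronization theorem to upgrade subsequential convergence of $\Bar{W}_\kappa$ to convergence of the full family), so your write-up simply supplies the elementary argument the authors took for granted. Your side remark is also accurate: completeness of $(X,d)$ is never used, since the candidate limit $x^*$ is given in advance; the paper's inclusion of that hypothesis is harmless but superfluous. The bookkeeping point you flag (choosing $N = n_{i-1}+1$ to force strict monotonicity of the indices) is exactly the right care needed to make the extracted family a legitimate subsequence, so there is no gap.
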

\begin{theorem}
    Let ${\kappa_n}$ be a sequence in $\mathbb{R}$, then $\left( \Bar{U}_{\kappa_n}(t,\omega), \Bar{V}_{\kappa_n}(t,\omega) \right) \rightarrow \left( \Bar{W}(t,\omega), \Bar{W}(t,\omega) \right)$ pathwise uniformly on bounded time intervals $[T_1,T_2]$ on $\mathbb{R}$ for any sequence $\kappa_n\rightarrow\infty$.
\end{theorem}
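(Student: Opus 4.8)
The plan is to invoke the subsequence criterion of Lemma 6.2 and to neutralize the apparent blow-up of the coupling term $\kappa(\bar V_{\kappa}-\bar U_{\kappa})$ by passing to the half-sum $S_{\kappa}:=\tfrac12(\bar U_{\kappa}+\bar V_{\kappa})$. Adding the two equations in (5.1) makes the coupling cancel exactly, so that
\[
\frac{dS_{\kappa}}{dt}=\tfrac12\bigl[F(\bar U_{\kappa},O^1_t)+G(\bar V_{\kappa},O^2_t)\bigr],
\]
a right-hand side carrying no factor of $\kappa$. By Lemma 6.2 (applied in $C([T_1,T_2],\mathbb{R}^{2d})$ for fixed $\omega$) it suffices to show that every subsequence of $\{(\bar U_{\kappa_n},\bar V_{\kappa_n})\}$ admits a further subsequence converging, uniformly on $[T_1,T_2]$, to the common limit $(\bar W,\bar W)$.

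First I would establish precompactness. Since $\mathcal{A}_{\kappa}(\omega)\subset B_1(\omega)$ for all $\kappa\ge1$ and the solutions are stationary, $\|\bar U_{\kappa}(t,\omega)\|$ and $\|\bar V_{\kappa}(t,\omega)\|$ are bounded by $R_1(\theta_t\omega)$, hence bounded on the compact interval $[T_1,T_2]$ uniformly in $\kappa$. Together with the linear growth of $f,g$ and the local boundedness of $O^1_t,O^2_t$, this bounds $dS_{\kappa}/dt$ uniformly on $[T_1,T_2]$, so $\{S_{\kappa_n}\}$ is uniformly bounded and equicontinuous. Arzelà--Ascoli then yields a subsequence along which $S_{\kappa_n}\to S^{*}$ uniformly, and a diagonal extraction over the intervals $[-N,N]$ upgrades this to a single entire limit $S^{*}$ of sub-exponential growth.

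Next I would use the synchronization estimate from the end of Section 5. For the stationary solutions the inequality $\tfrac{d}{dt}\|\bar U_{\kappa}-\bar V_{\kappa}\|^{2}\le-\kappa\|\bar U_{\kappa}-\bar V_{\kappa}\|^{2}+\tfrac1\kappa M_s$, with a $\kappa$-uniform, sub-exponentially bounded forcing $M_s$, combined with pullback ($t_0\to-\infty$) gives $\sup_{t\in[T_1,T_2]}\|\bar U_{\kappa}(t)-\bar V_{\kappa}(t)\|^{2}=O(\kappa^{-2})\to0$. Hence along the chosen subsequence both $\bar U_{\kappa_n}=S_{\kappa_n}+\tfrac12(\bar U_{\kappa_n}-\bar V_{\kappa_n})$ and $\bar V_{\kappa_n}=S_{\kappa_n}-\tfrac12(\bar U_{\kappa_n}-\bar V_{\kappa_n})$ converge uniformly to $S^{*}$. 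Passing to the limit in the integral form of the $S_{\kappa}$ equation, the continuity of $F,G$ in their first argument yields
\[
S^{*}(t)=S^{*}(T_1)+\tfrac12\int_{T_1}^{t}\bigl[F(S^{*},O^1_s)+G(S^{*},O^2_s)\bigr]\,ds,
\]
so $S^{*}$ solves the averaged equation (6.1). Since the averaged vector field inherits the one-sided dissipative Lipschitz condition with the same constant $L$, any two entire tempered solutions obey a Gronwall estimate with exponent $-2L(t-t_0)+\int_{t_0}^t(a_1O^1_s+a_2O^2_s)\,ds\to-\infty$ as $t_0\to-\infty$ (by Lemma 2.4); pulling back forces $S^{*}=\bar W$, independently of the subsequence. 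Lemma 6.2 then delivers convergence of the whole sequence.

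The main obstacle is precisely the $\kappa$-dependence of the coupling: one cannot bound $d\bar U_{\kappa}/dt$ or $d\bar V_{\kappa}/dt$ directly, and the resolution is the half-sum cancellation together with the $O(\kappa^{-2})$ synchronization rate. The second delicate point is pinning the subsequential limit to the \emph{unique} stationary solution $\bar W$ rather than to an arbitrary solution of (6.1) on $[T_1,T_2]$; this is what forces the diagonal extraction to an entire trajectory and the use of the singleton-attractor (pullback) uniqueness, so that the limit is the same for every subsequence and Lemma 6.2 applies.
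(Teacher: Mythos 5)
Your proposal is correct and takes essentially the same route as the paper's proof: the half-sum $\tfrac12(\Bar{U}_\kappa+\Bar{V}_\kappa)$ to cancel the coupling, a uniform derivative bound on the common absorbing ball $B_1(\omega)$ followed by Arzel\`a--Ascoli, the Section 5 estimate $\|\Bar{U}_\kappa-\Bar{V}_\kappa\|\to 0$ to transfer the limit to both components, passage to the limit in the integral equation to see the limit solves (6.1), and the subsequence criterion (Lemma 6.2) for the full sequence. Your identification of the subsequential limit with $\Bar{W}$ (diagonal extraction to an entire trajectory plus the dissipativity/pullback uniqueness argument) is in fact more explicit than the paper's, which merely asserts that all subsequential limits coincide.
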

\begin{proof}
    Define $$\Bar{W}_\kappa(t,\omega):=\frac{1}{2}\left( \Bar{U}_\kappa(t,\omega)+\Bar{V}_\kappa(t,\omega) \right).$$
    It is easy to see that
    \begin{align}
        \frac{d\Bar{W}_\kappa}{dt}=&\frac{1}{2}[e^{-a_1O^1_t}f(e^{a_1O^1_t}\Bar{U}_\kappa+\frac{b_1}{a_1}(e^{a_1O^1_t}-1))+e^{-a_2O^2_t}g(e^{a_2O^2_t}\Bar{V}_\kappa+\frac{b_2}{a_2}(e^{a_2O^2_t}-1))] \notag \\
            &+\frac{1}{2}[(a_1\Bar{U}_\kappa+b_1)O^1_t+(a_2\Bar{V}_\kappa+b_2)O^2_t]. \notag
    \end{align}

    Thus
    $$\sup_{t\in[T_1,T_2]} \frac{d}{dt}\left \| \Bar{W}_\kappa(t,\omega) \right \|^2 \leq M_{T_1,T_2,\omega}<\infty$$
    by continuity and the fact that these solutions belong to the common compact ball $B_1(\omega)$. We can use the Arzelà–Ascoli theorem to deduce that there is a subsequence $\kappa_{n_j}\rightarrow\infty$ such that $\left \|\Bar{W}_{\kappa_{n_j}}(t,\omega)\right \|\rightarrow\left \|\Bar{W}(t,\omega)\right \|$ as $\kappa_{n_j}\rightarrow\infty$. Meanwhile we have
    $$\left \|\Bar{W}_{\kappa_{n_j}}(t,\omega)-\Bar{V}_{\kappa_{n_j}}(t,\omega)\right \|=\frac{1}{2}\left \| \Bar{U}_{\kappa_{n_j}}(t,\omega)-\Bar{V}_{\kappa_{n_j}}(t,\omega) \right \|\rightarrow0,$$
    $$\left \|\Bar{W}_{\kappa_{n_j}}(t,\omega)-\Bar{U}_{\kappa_{n_j}}(t,\omega)\right \|=\frac{1}{2}\left \| \Bar{V}_{\kappa_{n_j}}(t,\omega)-\Bar{U}_{\kappa_{n_j}}(t,\omega) \right \| \rightarrow0,$$
    as $\kappa_{n_j}\rightarrow\infty$. This is attributed to the demonstration provided in the previous section.

    So
    $$\Bar{U}_{\kappa_{n_j}}(t,\omega)=2\Bar{W}_{\kappa_{n_j}}(t,\omega)-\Bar{V}_{\kappa_{n_j}}(t,\omega)\rightarrow\Bar{W}(t,\omega),$$
    $$\Bar{V}_{\kappa_{n_j}}(t,\omega)=2\Bar{W}_{\kappa_{n_j}}(t,\omega)-\Bar{U}_{\kappa_{n_j}}(t,\omega)\rightarrow\Bar{W}(t,\omega),$$
    as $\kappa_{n_j}\rightarrow\infty$.

    Note that $\Bar{W}_\kappa(t,\omega)$ satisfies the following integral expression:
    \begin{align}
        \Bar{W}_\kappa(t,\omega)=&\Bar{W}_\kappa(T_1,\omega)+\frac{1}{2}\int^t_{T_1}e^{-a_1O^1_s(\omega)}f(e^{a_1O^1_s(\omega)}\Bar{U}_\kappa(s,\omega)+\frac{b_1}{a_1}(e^{a_1O^1_s(\omega)}-1))ds \notag \\
        &+\frac{1}{2}\int^t_{T_1}e^{-a_2O^2_s(\omega)}g(e^{a_2O^2_s(\omega)}\Bar{V}_\kappa(s,\omega)+\frac{b_2}{a_2}(e^{a_2O^2_s(\omega)}-1))ds \notag \\
        &+\frac{1}{2}\int^t_{T_1}(a_1\Bar{U}_\kappa(s,\omega)+b_1)O^1_s(\omega)+(a_2\Bar{V}_\kappa(s,\omega)+b_2)O^2_t(\omega)ds.
    \end{align}

    It follows that the $\kappa_{n_j}$ subsequence converges pathwise to
        \begin{align}
        \Bar{W}(t,\omega)=&\Bar{W}(T_1,\omega)+\frac{1}{2}\int^t_{T_1}e^{-a_1O^1_s(\omega)}f(e^{a_1O^1_s(\omega)}\Bar{W}(s,\omega)+\frac{b_1}{a_1}(e^{a_1O^1_s(\omega)}-1))ds \notag \\
        &+\frac{1}{2}\int^t_{T_1}e^{-a_2O^2_s(\omega)}g(e^{a_2O^2_s(\omega)}\Bar{W}(s,\omega)+\frac{b_2}{a_2}(e^{a_2O^2_s(\omega)}-1))ds \notag \\
        &+\frac{1}{2}\int^t_{T_1}(a_1\Bar{W}(s,\omega)+b_1)O^1_s(\omega)+(a_2\Bar{W}(s,\omega)+b_2)O^2_t(\omega)ds,
    \end{align}
    on  any bounded time interval $[T_1,T_2]$, so $\Bar{W}(t,\omega)$ is a solution of RDE (6.1) for all $t\in\mathbb{R}$. Moreover, we note that all possilbe subsequence here have the same limit. Thus, by Lemma 6.2, every full sequence $\Bar{W}_\kappa(t,\omega)$ actually converges to $\Bar{W}(t,\omega)$ for the whole sequence $\kappa_n\rightarrow\infty$.
\end{proof}

As a straight consequence of the arguments in the previous proof we have
\begin{corollary}
    $\left( \Bar{U}_\kappa(t,\omega), \Bar{V}_\kappa(t,\omega) \right) \rightarrow \left( \Bar{W}(t,\omega), \Bar{W}(t,\omega) \right)$ pathwise uniformly on bounded time intervals $[T_1,T_2]$ on $\mathbb{R}$ as $\kappa\rightarrow\infty$.
\end{corollary}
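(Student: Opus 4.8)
The plan is to obtain Corollary 6.1 directly from Theorem 6.3 by invoking the elementary topological principle that, in a metric space, a family indexed by a continuous parameter converges as the parameter tends to infinity if and only if it converges along every sequence of parameter values tending to infinity. Since Theorem 6.3 already establishes the latter for \emph{all} sequences $\kappa_n\to\infty$, only this passage from sequences to the full limit remains to be carried out.

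First I would fix $\omega\in\Omega$ and a bounded interval $[T_1,T_2]\subset\mathbb{R}$ and view the family
$$\big\{(\Bar{U}_\kappa(\cdot,\omega),\Bar{V}_\kappa(\cdot,\omega))\big\}_{\kappa\geq1}$$
as a curve in the Banach space $C([T_1,T_2],\mathbb{R}^{2d})$ endowed with the uniform norm $\|h\|_\infty:=\sup_{t\in[T_1,T_2]}\|h(t)\|$. With this identification the phrase ``pathwise uniformly on $[T_1,T_2]$'' is precisely convergence in $\|\cdot\|_\infty$, and the confinement $\mathcal{A}_\kappa(\omega)\subset B_1(\omega)$ for $\kappa\geq1$ guarantees that all these curves lie in one fixed bounded set, so that the relevant distances are finite.

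Next I would argue by contraposition. If the continuous limit as $\kappa\to\infty$ were to fail, there would exist $\epsilon>0$ and a sequence $\kappa_n\to\infty$ with
$$\big\|(\Bar{U}_{\kappa_n}(\cdot,\omega),\Bar{V}_{\kappa_n}(\cdot,\omega))-(\Bar{W}(\cdot,\omega),\Bar{W}(\cdot,\omega))\big\|_\infty\geq\epsilon\quad\text{for all }n.$$
But Theorem 6.3 applies verbatim to this particular sequence and forces $(\Bar{U}_{\kappa_n},\Bar{V}_{\kappa_n})\to(\Bar{W},\Bar{W})$ uniformly on $[T_1,T_2]$, i.e. in $\|\cdot\|_\infty$, contradicting the displayed lower bound. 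Hence no such $\epsilon$ exists and $(\Bar{U}_\kappa,\Bar{V}_\kappa)\to(\Bar{W},\Bar{W})$ uniformly on $[T_1,T_2]$ as $\kappa\to\infty$, which is the assertion.

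I do not expect any genuine analytic obstacle here: once Theorem 6.3 is in hand the corollary is a formal consequence, and indeed the paper signals this by writing ``as a straight consequence of the arguments in the previous proof''. The only step deserving attention is bookkeeping, namely verifying that the mode of convergence supplied by Theorem 6.3 (uniform on each bounded $[T_1,T_2]$ and valid for \emph{every} sequence $\kappa_n\to\infty$) is exactly the hypothesis required by the sequence-to-net principle, so that the contrapositive argument closes without invoking any further estimate.
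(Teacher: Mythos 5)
Your proposal is correct and matches the paper's intent: the paper derives this corollary as an immediate consequence of Theorem 6.3, and your contraposition argument (continuous-parameter convergence in a metric space follows from convergence along every sequence $\kappa_n\to\infty$) is exactly the formal bookkeeping that the paper leaves implicit with the phrase ``as a straight consequence of the arguments in the previous proof.'' No gap: Theorem 6.3 is indeed stated for arbitrary sequences tending to infinity, which is precisely what your sequence-to-limit passage requires.
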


Recall the transformation introduced in Section 3, let
\begin{align}
    U_t=e^{-a_1O^1_t}(X_t+\frac{b_1}{a_1})-\frac{b_1}{a_1},  \notag\\
    V_t=e^{-a_2O^2_t}(Y_t+\frac{b_2}{a_2})-\frac{b_2}{a_2},
\end{align}

For RDEs (5.1), using (6.4) and (3.4), then we obtain the coupled SDEs below:
\begin{equation}
\left\{\begin{array}{l}
dX_t=[f(X_t)+\kappa(e^{2\eta_t}Y_t-X_t)+\kappa(\frac{b_2}{a_2}(e^{2\eta_t}-e^{a_1O^1_t})-\frac{b_1}{a_1}(1-e^{a_1O^1_t}))]dt+(a_1X_t+b_1)dB^{H_1}_t,\\
 \\
dY_t=[g(Y_t)+\kappa(e^{-2\eta_t}X_t-Y_t)+\kappa(\frac{b_1}{a_1}(e^{-2\eta_t}-e^{a_2O^2_t})-\frac{b_2}{a_2}(1-e^{a_2O^2_t}))]dt+(a_2Y_t+b_2)dB^{H_2}_t,
\end{array}\right.
\end{equation}
where $\eta_t=\frac{1}{2}(a_1O^1_t-a_2O^2_t)$.

According to the previous results, it follows that coupled systems (6.5) has a unique stochastic stationary solution $\left(\Bar{X}_\kappa(t),\Bar{Y}_\kappa(t)\right)$, which is pathwise globally asymptotically stable with
\begin{align}
\begin{pmatrix}
\Bar{X}_\kappa(t,\omega)\\
\Bar{Y}_\kappa(t,\omega)
\end{pmatrix}\rightarrow
\begin{pmatrix}
e^{\eta_t}\Bar{Z}(t,\omega)+\frac{b_1}{a_1}(e^{a_1O^1_t(\omega)}-1)\\
e^{-\eta_t}\Bar{Z}(t,\omega)+\frac{b_2}{a_2}(e^{-a_2O^2_t(\omega)}-1)
\end{pmatrix}
,\ \  \kappa\rightarrow\infty,
\end{align}
where $\Bar{Z}(t,\omega)$ is the pathwise asymptotically stable solution of the following \emph{averaged} SDE:
\begin{align}
    dZ_t=&\frac{1}{2}[e^{-\eta_t}f\Big(^{\eta_t}Z_t+\frac{b_1}{a_1}(e^{a_1O^1_t}-1)\Big)+e^{\eta_t}g\Big(^{-\eta_t}Z_t+\frac{b_2}{a_2}(e^{a_2O^2_t}-1)\Big)]dt
 \notag\\
    &+\frac{1}{2}e^{\frac{1}{2}(a_1O^1_t+a_2O^2_t)}((b_1O^1_t+b_2O^2_t))dt  \notag\\
    &+\frac{1}{2}a_1Z_tdB^{H_1}_t+\frac{1}{2}a_2Z_tdB^{H_2}_t.
\end{align}

\section{Two examples}
In this section, we discuss two special cases respectively and illustrate the corresponding results of synchronization.
\subsection{Case 1: Pure multiplicative noise}
Consider the case that $b_1=b_2=0$. Now the coupled system (6.5) reduces to the coupled SDEs:
\begin{equation}
\left\{\begin{array}{l}
dX_t=[f(X_t)+\kappa(e^{2\eta_t}Y_t-X_t)]dt+a_1X_tdB^{H_1}_t,\\
 \\
dY_t=[g(Y_t)+\kappa(e^{-2\eta_t}X_t-Y_t)]dt+a_2Y_tdB^{H_2}_t,
\end{array}\right.
\end{equation}
where $a_1,a_2\in\mathbb{R}$, $a_1,a_2\neq0$, $\eta_t=\frac{1}{2}(a_1O^1_t-a_2O^2_t)$, and
\begin{equation}
    O^1_t=e^{-t}\int^t_{-\infty}{e^s}d{B^{H_1}_s},\quad O^2_t=e^{-t}\int^t_{-\infty}{e^s}d{B^{H_2}_s}. \notag
\end{equation}

The functions $f,g$ satisfy linear growth conditions (1.2), one-sided dissipative Lipschitz conditions (1.3) and integrability conditions (1.4). By employing the method of variables substitution in conjunction with the corresponding RDEs system, we obtain the following results.

The system (7.1) has a unique stationary solution $(\Bar{X}_\kappa,\Bar{Y}_\kappa)$, which is pathwise globally asymptotically stable, i.e.
\begin{center}
    $(\Bar{X}_\kappa(\theta_t\omega),\Bar{Y}_\kappa(\theta_t\omega))\rightarrow(\Bar{Z}(\theta_t\omega)\cdot e^{-a_1O^1_t(\omega)}, \Bar{Z}(\theta_t\omega)\cdot e^{-a_2O^2_t(\omega)})$, as $\kappa\rightarrow\infty,$
\end{center}
where the convergence on any interval $[T_1,T_2]\subseteq \mathbb{R}$ is in the sense of pathwise.

$\Bar{Z}_t(\omega)$ is the pathwise asymptotically stable solution of the averaged RDE:
\begin{equation}
    \frac{dZ}{dt}=\frac{1}{2}\Big[ e^{-a_1O^1_t(\omega)}f(e^{a_1O^1_t(\omega)}Z)+e^{-a_2O^2_t(\omega)}g(e^{a_2O^2_t(\omega)}Z)+(a_1O^1_t(\omega)+a_2O^2_t(\omega))Z \Big].
\end{equation}
\\

\subsection{Case 2: Mixed noise}
We consider the stochastic differential equations with mixed noise in $\mathbb{R}^d$:
\begin{align}
dX_t&=f(X_t)dt+(aX_t+b_1)dB^{H_1}_t, \notag \\
dY_t&=g(Y_t)dt+b_2dB^{H_2}_t,
\end{align}
where $B^{H_1}_t$ and $B^{H_2}_t$ are one-dimensional two-sided fractional Brownian motions with Hurst parameter $H_1,H_2\in(\frac{1}{2},1)$, $0\neq a\in\mathbb{R}$, $b_1,b_2\in\mathbb{R}^d$. The functions $f,g$ satisfy the same conditions as metioned above.

Below we discuss the synchronization results using the same method as described in the previous sections. Considering the transformation:
\begin{align}
    U_t(\omega)=e^{-aO^1_t(\omega)}\cdot(X_t(\omega)+\frac{b_1}{a})-\frac{b_1}{a}, \quad     V_t(\omega)=Y_t(\omega)-b_2O^2_t(\omega),
\end{align}
where
$$O^1_t=e^{-t}\int^t_{-\infty}{e^s}d{B^{H_1}_s},\quad O^2_t=e^{-t}\int^t_{-\infty}{e^s}d{B^{H_2}_s},\quad t\in\mathbb{R},$$
are two fractional Orstein-Uhlenck processes. Then (7.3) transforms to the pathwise random differential equations:
\begin{align}
    \frac{dU}{dt}&=F(U,O^1_t):=e^{-aO^1_t}f(e^{aO^1_t}(U+\frac{b_1}{a})+(aU+b_1)O^1_t,  \notag \\
    \frac{dV}{dt}&=G(V,O^2_t):=g(V+b_2O^2_t)+b_2O^2_t.
\end{align}

By linear cross coupling, we consider the coupled RDEs:
\begin{equation}
\left\{\begin{array}{l}
 \frac{dU}{dt}=F(U,O^1_t)+\kappa(V-U),\\
 \frac{dV}{dt}=G(V,O^2_t)+\kappa(U-V).
\end{array}\right.
\end{equation}

System (7.6) has a random attractor consist of a single stochastic stationary process $\Big(\Bar{U}_\kappa(\omega),\Bar{V}_\kappa(\omega)\Big)$. In particular,  $\Big(\Bar{U}_\kappa(\omega),\Bar{V}_\kappa(\omega)\Big)\rightarrow\Big( \Bar{W}(\omega),\Bar{W}(\omega)\Big)$, as $\kappa\rightarrow\infty$, where $\Bar{W}(\omega)$ is the pathwise asymptotically stable solution of the RDE:
\begin{equation}
    \frac{dW}{dt}=\frac{1}{2}\Big[e^{-aO^1_t}f(e^{aO^1_t}(W+\frac{b_1}{a}))+g(W+b_2O^2_t)+(aW+b_1)O^1_t+b_2O^2_t\Big].
\end{equation}

In terms of the original system (7.3), we have the coupled SDEs:
\begin{equation}
\left\{\begin{array}{l}
dX_t=\Big[f(X_t)+\kappa(e^{aO^1_t}Y_t-X_t)+\kappa(\frac{b_1}{a}(e^{aO^1_t}-1)-e^{aO^1_t}b_2O^2_t)\Big]dt+(aX_t+b_1)dB^{H_1}_t,\\
 \\
dY_t=\Big[g(Y_t)+\kappa(e^{-aO^1_t}X_t-Y_t)+\kappa(b_2O^2_t-\frac{b_1}{a}(1-e^{aO^1_t}))\Big]dt+b_2dB^{H_2}_t.
\end{array}\right.
\end{equation}

This system has a unique stationary solution $(\Bar{X}_\kappa,\Bar{Y}_\kappa)$, which is pathwise globally asymptotically stable with
\begin{align}
  \Big(\Bar{X}_\kappa(\theta_t\omega),\Bar{Y}_\kappa(\theta_t\omega)\Big)\rightarrow\Big(\Bar{W}(\theta_t\omega)\cdot e^{aO^1_t(\omega)}+\frac{b_1}{a}(e^{aO^1_t(\omega)}-1),\Bar{W}(\theta_t\omega)+b_2O^2_t(\omega)\Big), \notag
\end{align}
 as $\kappa\rightarrow\infty$.\\

{\textbf{Declaration of competing interest}

The authors declare that they have no known competing financial interests or personal relationships that could appeared to influence the work reported in this paper.}\\

{\textbf{Data availability}

No data was used for the research described in the article.}

\section*{Acknowledgements} {This work is supported in part by the NSFC Grant Nos. 12171084,  the fundamental Research
Funds for the Central Universities No. RF1028623037 and
the Jiangsu Center for Collaborative Innovation in Geographical Information Resource and Applications.}

\bibliographystyle{abbrv}

\end{document}